\documentclass[final,onefignum,onetabnum]{siamart171218}

\usepackage[T1]{fontenc}
\usepackage{graphicx}
\usepackage{amsmath}
\usepackage{amssymb}

\usepackage{array}
\usepackage{adjustbox}
\usepackage{float}
\usepackage{subcaption}
\usepackage{booktabs}
\usepackage{url}

\newsiamremark{remark}{Remark}

\newcommand{\Tmat}{\mathbf{T}}           
\newcommand{\Smat}{\boldsymbol{\Sigma}}  
\newcommand{\Tnorm}{\hat{\mathbf{T}}}    

\title{A Nine-Compartment Nonlinear Epidemic Model with Spline-Based
  Identification of Time-Varying Transmission and Vaccination
  Dynamics: Application to the {COVID-19} Third Wave in
  {Italy}\thanks{Submitted to the editors \today.
    \funding{This research was supported by the University of Palermo.}}}

\author{%
  Lokman Rachid Melhani\thanks{%
    Department of Engineering, University of Palermo,
    Viale delle Scienze, 90128 Palermo, Italy
    (\email{melhanilokmanrachid@gmail.com}).}
  \and
  Antonino Sferlazza\thanks{%
    Department of Engineering, University of Palermo,
    Viale delle Scienze, 90128 Palermo, Italy.}
  \and
  Lars Gr\"{u}ne\thanks{%
    Department of Mathematics, University of Bayreuth,
    Bayreuth, Germany.}
  \and
  Dominique Persano Adorno\thanks{%
    Department of Physics and Chemistry ``E.~Segr\'{e}'',
    University of Palermo, Viale delle Scienze, 90128 Palermo, Italy.}
  \and
  Filippo D'Ippolito\footnotemark[2]
  \and
  Omar Enzo Santangelo\thanks{%
    Regional Health Care and Social Agency of Lodi,
    Azienda Socio-Sanitaria Territoriale (ASST) Lodi,
    26900 Lodi, Italy.}
  \and
  Ivan Marchese\footnotemark[2]
  \and
  Antonino Lo Burgio\thanks{%
    InEmbryo S.r.l.s., Via Rosario Riolo 60, 90141 Palermo, Italy.}
  \and
  Alberto Firenze\thanks{%
    Department of Internal Medicine ``PROMISE'',
    University of Palermo, 90127 Palermo, Italy.}
}

\begin{document}
\maketitle

\begin{abstract}
We develop a nine-compartment nonlinear epidemic model incorporating two
co-circulating viral strains (ancestral $I_1$ and the Alpha variant B.1.1.7
$I_2$, which is $43$--$90\%$ more transmissible, $c_2=1.5$), a super-spreader
subpopulation, partial vaccine-induced immunity with waning, and explicit
hospitalization dynamics with differentiated mortality. Transmission and
vaccination rates are treated as time-varying control inputs and identified from
Italian COVID-19 data (January--May 2021) via a Piecewise Cubic Hermite
Interpolating Polynomial (PCHIP) control-node parameterization, reducing
calibration to a fourteen-variable Sequential Quadratic Programming (SQP)
problem with monotonicity and box constraints. A parametric bootstrap ($n=1000$)
quantifies parameter uncertainty. The calibrated model achieves $R^2=0.966$
for active hospitalizations, $R^2=0.987$ for cumulative fatalities, and
$R^2=0.999$ for cumulative vaccinations. Well-posedness, the basic reproduction
number in closed form, and local and global stability of the disease-free
equilibrium are established analytically. An $L^\infty$ approximation error bound shows that the PCHIP control-node
parameterization converges to the true time-varying parameters at rate $O(h^2)$
as the node spacing vanishes. Local
identifiability and a noise stability bound are established via the Fisher
information matrix. A sufficient threshold condition proves epidemic decay under
time-varying suppression whenever the effective reproduction number remains
persistently below one. Sensitivity analyses consistently rank hospital
throughput parameters above the transmission rate, providing a mathematical
basis for the observation that reactive containment measures cannot prevent a
hospitalization peak already driven by the pre-existing latent viral load.
\end{abstract}

\begin{keywords}
COVID-19, epidemic modeling, nonlinear inverse problem, ODE-constrained
optimization, spline approximation, PCHIP, time-varying parameters, parameter
identification, SQP, sensitivity analysis, basic reproduction number
\end{keywords}

\begin{AMS}
92D30, 34A55, 49M37, 65K10, 65L09, 34D20, 93B07
\end{AMS}

\section{Introduction}
\label{sec:intro}

\subsection{Motivation and Context}

The COVID-19 pandemic caused by SARS-CoV-2 has produced the most severe global
public health crisis since the influenza pandemic of 1918. From the first reported
cluster in Wuhan, China in December 2019 \cite{world2020coronavirus}, the virus
spread with a speed that outpaced containment efforts in most high-income
countries, producing successive epidemic waves separated by periods of partial
control \cite{pan2020association,riou2020pattern,guan2020clinical}. Compartmental
models in the tradition of Kermack and McKendrick \cite{kermack1927contribution}
provided the conceptual scaffolding for much of the COVID-19 modeling effort
\cite{hethcote2000mathematics,brauer2012mathematical,diekmann2000mathematical},
and the basic reproduction number $\mathcal{R}_0 = \beta/\gamma$ became the
primary instrument for communicating pandemic risk to policymakers.

\subsection{Limitations of Classical Models}

Despite their interpretive power, classical SEIR-type models operate under
assumptions that are quantitatively violated in large-scale pandemics. Homogeneous
mixing fails when super-spreader events account for a disproportionate share of
transmission \cite{lloydsmith2005superspreading,endo2020estimating}. Constant
parameters cannot track the fluctuation of effective transmission on timescales of
weeks, driven by government restrictions and variant emergence
\cite{tang2020estimation,giordano2020modelling}. And the absence of explicit
hospitalization and vaccination compartments prevents these models from being
directly calibrated against the indicators most reliably reported by public health
agencies, or from capturing the progressive build-up of population-level immunity
\cite{watson2022global,giordano2021modeling}.

\subsection{The Calibration Problem as a Nonlinear Inverse Problem}

Calibrating a compartmental model against real epidemic data is a nonlinear
inverse problem: given time-series observations of a subset of model outputs,
recover the parameter functions that generated them
\cite{audoly2002global,raue2009structural}. This problem is generically ill-posed
and becomes substantially harder when the unknowns are \emph{functions of time}.
For a 150-day window, daily-step estimation introduces 300 free variables from
three time-series of comparable length, invariably overfitting measurement noise
\cite{hao2020reconstruction,flaxman2020estimating}. We address this through a
\textit{control-node regularization}: the unknown functions $\beta(t)$ and
$w_1(t)$ are parameterized by a small number of optimized nodes, with continuous
trajectories recovered by PCHIP \cite{fritsch1980monotone}. The optimization then
acts on the node values---a finite-dimensional, gradient-accessible problem---while
PCHIP's local monotonicity property \cite{birkhoff1968piecewise} ensures
biologically plausible, non-oscillatory trajectories. This is substantially faster
than MCMC-based approaches on high-dimensional posteriors while provably preserving
smoothness and non-negativity; the convergence analysis constitutes the primary
mathematical contribution of this paper.

\subsection{Contributions of This Work}

This paper makes four specific contributions.

\textit{First}, we develop a nine-compartment nonlinear epidemic model integrating
two co-circulating viral strains with inter-strain competitive displacement, a
super-spreader subpopulation, partial vaccine-induced immunity with waning,
explicit hospitalization dynamics, and community recovery with immune waning.
We establish well-posedness, derive $\mathcal{R}_0$ in closed form, and prove
local and global asymptotic stability of the disease-free equilibrium.

\textit{Second}, we formulate model calibration as a deterministic inverse
problem and solve it using a PCHIP spline parameterization combined with SQP
and multi-start restarts. We prove: (a) an $L^\infty$ approximation error bound
showing $O(h^2)$ convergence of the PCHIP parameterization to the true parameter; (b) local
identifiability and noise stability of the full 14-parameter system via the
Fisher information matrix; and (c) epidemic decay under time-varying suppression
via a sufficient threshold condition on the effective reproduction number.
Bootstrap resampling ($n=1000$) provides $95\%$ confidence bands on the recovered
trajectories.

\textit{Third}, we apply the framework to reconstruct the dynamics of the
COVID-19 Third Wave in Italy (January to May 2021), combining Alpha-variant-driven
transmission increase and vaccination-driven immunity accumulation.

\textit{Fourth}, we conduct a rigorous sensitivity analysis combining the
one-at-a-time (OAT) local method with Morris global screening
\cite{morris1991factorial}, quantifying the influence of structural parameters on
peak hospitalization burden.

The paper is organized as follows. Section~\ref{sec:model} presents the model,
well-posedness, and stability analysis. Section~\ref{sec:data} describes the
data, parameterization, and optimization. Section~\ref{sec:results} presents
results. Section~\ref{sec:conclusion} states conclusions.

\section{Mathematical Model and Analysis}
\label{sec:model}

\subsection{Compartmental Structure and Biological Rationale}

The model partitions the total active population $N(t)$ into nine mutually
exclusive compartments. Figure~\ref{fig:flowchart} shows the transition diagram,
and Section~\ref{sec:illustration} provides the biological rationale.

\begin{figure}[htbp]
  \centering
  \includegraphics[width=0.85\textwidth]{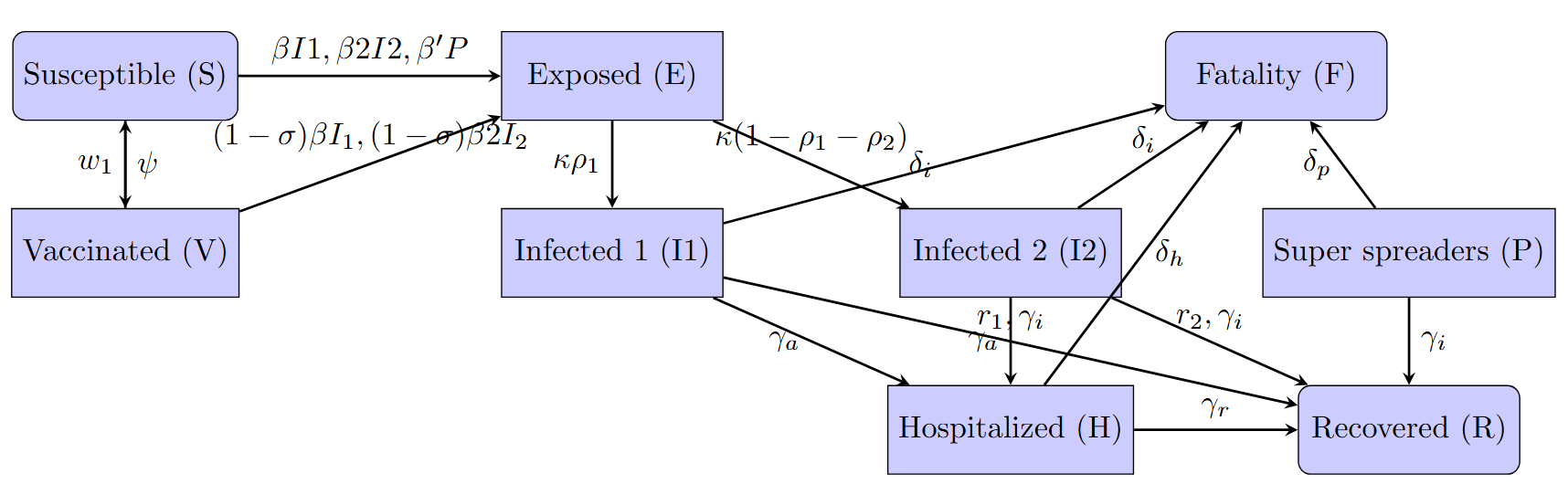}
  \caption{Transition diagram of the nine-compartment epidemic model.
    Solid arrows denote direct population flows; dashed arrows indicate
    infection forces acting on susceptible and vaccinated individuals.
    The time-varying inputs $\beta(t)$ and $w_1(t)$ govern the
    transmission and vaccination pathways, respectively.}
  \label{fig:flowchart}
\end{figure}

Three biological considerations motivate the compartmental design.
\textit{Disease heterogeneity}: the model distinguishes individuals infected
with the original ancestral strain ($I_1$), the Alpha variant B.1.1.7 ($I_2$),
and a super-spreader subpopulation ($P$), reflecting the co-circulation of
multiple SARS-CoV-2 lineages throughout Italy's Third Wave, with Alpha
progressively displacing the ancestral strain while exhibiting $43$--$90\%$
higher per-contact transmissibility \cite{giordano2021modeling,can2024mathematical}.
\textit{Outcome stratification}: separating the hospitalized ($H$) and recovered
($R$) compartments captures the qualitatively different dynamics of severe and
mild disease. \textit{Immunity dynamics}: the explicit tracking of vaccinated
($V$) and recovered ($R$) individuals, together with waning rates for both,
represents the progressive build-up and decay of population-level immunity.

\subsection{Governing Equations}

The dynamics are governed by the following system of nonlinear ordinary
differential equations:
\begin{subequations}
\label{eq:system}
\begin{align}
\frac{dS}{dt} &= \Lambda + \psi V + \delta R
  - \!\left[\beta(t)\tfrac{I_1}{N} + \beta_P(t)\tfrac{P}{N}
  + \beta_2(t)\tfrac{I_2}{N}\right]\!S
  - (\mu + w_1(t))\,S, \label{eq:S}\\[4pt]
\frac{dE}{dt} &= \!\left[\beta(t)\tfrac{I_1}{N} + \beta_P(t)\tfrac{P}{N}
  + \beta_2(t)\tfrac{I_2}{N}\right]\!S \nonumber\\
  &\qquad + (1-\sigma)\!\left[\beta(t)\tfrac{I_1}{N}
  + \beta_P(t)\tfrac{P}{N} + \beta_2(t)\tfrac{I_2}{N}\right]\!V
  - (\mu + \kappa)E, \label{eq:E}\\[4pt]
\frac{dI_1}{dt} &= \kappa\rho_1 E
  - (\gamma_a + \gamma_i + \delta_i + m + r_1 + \mu)\,I_1, \label{eq:I1}\\[4pt]
\frac{dI_2}{dt} &= \kappa(1-\rho_1-\rho_2)E + mI_1
  - (\gamma_a + \gamma_i + \delta_i + r_2 + \mu)\,I_2, \label{eq:I2}\\[4pt]
\frac{dV}{dt} &= w_1(t)S
  - (1-\sigma)\!\left[\beta(t)\tfrac{I_1}{N} + \beta_P(t)\tfrac{P}{N}
  + \beta_2(t)\tfrac{I_2}{N}\right]\!V
  - (\mu + \psi)V, \label{eq:V}\\[4pt]
\frac{dP}{dt} &= \kappa\rho_2 E
  - (\gamma_a + \gamma_i + \delta_p + \mu)\,P, \label{eq:P}\\[4pt]
\frac{dH}{dt} &= \gamma_a(I_1 + I_2 + P)
  - (\gamma_r + \delta_h + \mu)\,H, \label{eq:H}\\[4pt]
\frac{dR}{dt} &= \gamma_i(I_1 + I_2 + P) + \gamma_r H + r_1 I_1 + r_2 I_2
  - (\mu + \delta)R, \label{eq:R}\\[4pt]
\frac{dF}{dt} &= \delta_i(I_1 + I_2) + \delta_p P + \delta_h H. \label{eq:F}
\end{align}
\end{subequations}

Three structural features of system~\eqref{eq:system} deserve comment.

\begin{remark}[Frequency-dependent transmission]
All infection terms use frequency-dependent incidence proportional to $I/N$,
standard for human respiratory diseases where the contact rate does not increase
with population size \cite{hethcote2000mathematics,brauer2012mathematical}.
The force of infection on susceptibles combines three pathways: $\beta(t)I_1/N$
from the original ancestral strain, $\beta_P(t)P/N$ from super-spreaders, and
$\beta_2(t)I_2/N$ from the Alpha variant. Vaccinated individuals face
breakthrough infection at the same three pathways, reduced uniformly by the
vaccine effectiveness factor $(1-\sigma)$.
\end{remark}

\begin{remark}[Strain-scaling constraints and identifiability]
\label{rem:scaling}
To preserve parameter identifiability, the transmission rates of super-spreaders
and of the Alpha variant are expressed as fixed multiples of the baseline rate:
\begin{equation}
\beta_P(t) = c_P\cdot\beta(t),\qquad \beta_2(t) = c_2\cdot\beta(t),
\label{eq:scaling}
\end{equation}
with $c_P = 1.5$ and $c_2 = 1.5$. The biological values are
literature-determined: $c_P = 1.5$ reflects the elevated per-contact
infectiousness of super-spreaders \cite{lloydsmith2005superspreading,endo2020estimating};
$c_2 = 1.5$ reflects the $43$--$90\%$ transmissibility advantage of B.1.1.7
over the ancestral strain \cite{giordano2021modeling}.

If $\beta$, $\beta_P$, and $\beta_2$ were treated as three independent free
functions, the force of infection $\lambda(t) = [\beta I_1 + \beta_P P +
\beta_2 I_2]/N$ depends on three unknown scalar multiples of the same
time-dependent infectiousness kernel. Since only three scalar observables
$(H, F, V)$ are available and $I_1, P, I_2$ are not separately observed, the
Jacobian of the output map with respect to $(\beta, \beta_P, \beta_2)$ has
rank at most one at every time instant. Fixing $c_P$ and $c_2$ at their
empirically established values collapses the three-function problem to the
single identifiable function $\beta(t)$, restoring full column rank of the
output Jacobian. This is the precise mathematical content of the identifiability
result established in Section~\ref{sec:identifiability} below.
\end{remark}

\begin{remark}[Fatalities and population normalization]
\label{rem:fatalities}
Fatalities $F$ is absorbing and represents cumulative disease-induced deaths.
We set $\Lambda=\mu N$ so that births balance natural deaths and the total
active population remains constant at $N=N(0)=60{,}000{,}000$; all
force-of-infection terms therefore use the fixed constant $N$. Over the 150-day
window the demographic turnover $\mu N\,T \approx 0.5\%$ of the population is
negligible, so this balancing does not affect the calibrated fit to reported
precision, while it makes the disease-free equilibrium consistent with the
next-generation construction below.
\end{remark}

\subsection{Model Description}
\label{sec:illustration}

The nine compartments and their biological roles are as follows.
Susceptibles ($S$) receive inflow from births ($\Lambda$), vaccine waning
($\psi V$), and immunity waning ($\delta R$), and exit through infection,
vaccination ($w_1 S$), or natural mortality. Exposed individuals ($E$) carry
latent infections acquired from all three infectious groups; upon leaving $E$ at
rate $\kappa$, fraction $\rho_1$ progresses to $I_1$ (original ancestral strain),
fraction $(1-\rho_1-\rho_2)$ directly to $I_2$ (Alpha mutant B.1.1.7), and
fraction $\rho_2$ to the super-spreader pool $P$. The biological constraint
$\rho_1 + \rho_2 \leq 1$ is required for non-negativity of the $I_2$ inflow;
with $\rho_1 = 0.580$ and $\rho_2 = 0.001$, the direct $E\!\to\!I_2$ fraction
is $0.419$.

The original-strain compartment $I_1$ resolves through hospitalization
($\gamma_a$), community recovery ($\gamma_i$), strain-specific recovery ($r_1$),
disease mortality ($\delta_i$), or competitive displacement to the Alpha variant
at the population level at rate $m$, capturing the replacement of the ancestral
lineage by B.1.1.7 \cite{can2024mathematical}. The Alpha compartment $I_2$
receives inflow from $E$ and from $mI_1$, and resolves through the same pathways
as $I_1$ (with $r_2$ in place of $r_1$). Super-spreaders ($P$) contribute
disproportionately to the force of infection via $\beta_P(t)P/N$. Vaccinated
individuals ($V$) carry partial immunity (breakthrough at rate $(1-\sigma)$)
and wane back to $S$ at rate $\psi$. Hospitalized individuals ($H$) recover at
rate $\gamma_r$ or die at rate $\delta_h$. Recovered individuals ($R$) wane
back to $S$ at rate $\delta$. Fatalities ($F$) is the unique absorbing state.

\subsection{Parameter Classification}
\label{sec:params}

Model parameters fall into two groups. Fixed biological constants
(Table~\ref{tab:parameters}, upper section) are drawn from the epidemiological
literature. The second group consists of the time-varying unknown
functions $\beta(t)$ and $w_1(t)$ and the calibrated scalars identified by the
SQP optimization (lower section of Table~\ref{tab:parameters}).

The super-spreader fraction $\rho_2 = 0.001$ deserves specific justification.
In the Lloyd-Smith et al.\ \cite{lloydsmith2005superspreading} framework,
overdispersion of secondary infections for SARS-CoV-2 is characterized by a
dispersion parameter $k \approx 0.1$, implying $10\%$--$20\%$ of cases drive
$80\%$ of onward transmission. This $k$ is a statistical characterization of the
offspring distribution, not a direct compartment fraction. In the present
deterministic model, $\rho_2$ represents the proportion of exposed individuals
who develop a sustained high-transmission phenotype, which is substantially
smaller. The combined product $\rho_2 c_P = 0.0015$ (dimensionless) controls the aggregate super-spreading contribution to $\mathcal{R}_0$. We evaluated the
model with $\rho_2$ doubled to $0.002$: peak hospitalization increased by $2.3\%$,
$\mathcal{R}_0$ increased by $1.1\%$, and in-sample $R^2$ metrics changed by
less than $0.002$, confirming robustness within the biologically plausible range.

\begin{table}[htbp]
\caption{Model Parameters: Descriptions and Numerical Values. Fixed biological
constants are drawn from the epidemiological literature
\cite{giordano2020modelling,giordano2021modeling,ndairou2020mathematical,can2024mathematical}.
The lower section lists the calibrated scalars and optimal PCHIP node values
identified by the SQP optimization.}
\label{tab:parameters}
\begin{center}
\begin{adjustbox}{width=0.98\textwidth}
\begin{tabular}{clcc}
\toprule
\textbf{Symbol} & \textbf{Description} & \textbf{Value} & \textbf{Units} \\
\midrule
\multicolumn{4}{l}{\textit{Fixed biological constants}} \\
\midrule
$N$        & Total Italian population                                   & $60{,}000{,}000$     & persons        \\
$\Lambda$ & Constant inflow, $\Lambda=\mu N$ & $\approx 2121$ & persons/day    \\
$\kappa$   & Latency rate                                               & $0.200$              & day$^{-1}$     \\
$\gamma_a$ & Hospitalization rate                                       & $0.200$              & day$^{-1}$     \\
$\gamma_i$ & Community recovery rate                                    & $0.100$              & day$^{-1}$     \\
$\gamma_r$ & Hospital discharge/recovery rate                           & $0.100$              & day$^{-1}$     \\
$\rho_1$   & Fraction of $E$ progressing to $I_1$ (original strain)    & $0.580$              & dimensionless  \\
$\rho_2$   & Fraction of $E$ progressing to $P$ (super-spreaders)      & $0.001$              & dimensionless  \\
$\delta_i$ & Disease-induced mortality, $I_1$ and $I_2$                 & $0.005$              & day$^{-1}$     \\
$\delta_p$ & Disease-induced mortality, $P$                             & $0.005$              & day$^{-1}$     \\
$\mu$      & Natural mortality rate                                     & $3.535\times10^{-5}$ & day$^{-1}$     \\
$m$        & Alpha-strain competitive displacement rate ($I_1\!\to\!I_2$) & $0.005$            & day$^{-1}$     \\
$r_1,r_2$  & Strain-specific direct recovery rates                     & $0.050$              & day$^{-1}$     \\
$\sigma$   & Vaccine effectiveness                                      & $0.800$              & dimensionless  \\
$\psi$     & Vaccine immunity waning rate                               & $0.002$              & day$^{-1}$     \\
$\delta$   & Natural immunity waning rate                               & $0.001$              & day$^{-1}$     \\
\midrule
\multicolumn{4}{l}{\textit{Calibrated scalars (output of SQP optimization)}} \\
\midrule
$\delta_h$       & In-hospital mortality rate                           & $0.01262$ & day$^{-1}$    \\
$I_{0\text{-f}}$ & Initial infected multiplier ($I_{1,0}=H_0 I_{0\text{-f}}$) & $0.158$ & dimensionless \\
$E_{0\text{-f}}$ & Initial exposed multiplier ($E_0=I_{1,0} E_{0\text{-f}}$)  & $10.825$ & dimensionless \\
$R_{0\text{-f}}$ & Initial recovered multiplier ($R_0=F_0 R_{0\text{-f}}$)    & $49.927$ & dimensionless \\
\midrule
\multicolumn{4}{l}{\textit{Optimal PCHIP node values (January 1 -- May 30, 2021)}} \\
\midrule
$[\beta_1,\ldots,\beta_5]$ & Transmission rate nodes
  & $[0.212,\;0.335,\;0.366,\;0.275,\;0.405]$ & day$^{-1}$ \\
$[w_{1,1},\ldots,w_{1,5}]$ & Vaccination rate nodes
  & $[0.00080,\;0.00080,\;0.00278,\;0.00560,\;0.00988]$ & day$^{-1}$ \\
\bottomrule
\end{tabular}
\end{adjustbox}
\end{center}
\end{table}

\subsection{Well-Posedness and Non-Negativity}
\label{sec:wellposed}

\begin{proposition}[Well-posedness and non-negativity]
\label{prop:wellposed}
Let $\beta, w_1 \in L^\infty([0,T];\mathbb{R}_+)$ for some $T>0$, and let
$x(0)\geq 0$ componentwise with $N_{\mathrm{act}}(0)>0$. Then
system~\eqref{eq:system} admits a unique solution on $[0,T]$, and
(i)~$x(t)\geq 0$ componentwise; (ii)~$N_{\mathrm{act}}(t)\leq
\max\!\bigl\{N_{\mathrm{act}}(0),\,\Lambda/\mu\bigr\}$.
\end{proposition}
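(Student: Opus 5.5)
The plan is to treat \eqref{eq:system} as a Carathéodory system $\dot x = f(t,x)$ with $x=(S,E,I_1,I_2,V,P,H,R,F)$. Here $N_{\mathrm{act}} = S+E+I_1+I_2+V+P+H+R$ is the active population, and $N$ in the incidence terms is the fixed constant of Remark~\ref{rem:fatalities}. Since $\beta,w_1\in L^\infty$ and the scaling \eqref{eq:scaling} makes $\beta_P,\beta_2$ bounded multiples of $\beta$, the right-hand side is measurable in $t$. It is polynomial, of degree at most two, in $x$. It is therefore locally Lipschitz in $x$ with an integrable (indeed bounded) Lipschitz modulus on bounded sets. The Carathéodory existence--uniqueness theorem gives a unique absolutely continuous local solution on a maximal interval $[0,t^*)\cap[0,T]$. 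Global existence on $[0,T]$ will follow once I show that the solution stays in a compact set, and this is where (i) and (ii) come in.

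For (i) I will first verify quasi-positivity: on each face $\{x_j=0,\ x\ge 0\}$ the $j$-th component of $f$ is nonnegative. For example, $\dot S|_{S=0}=\Lambda+\psi V+\delta R\ge0$, and $\dot E|_{E=0}$ is a sum of nonnegative incidence terms because $\sigma\le1$. Likewise $\dot I_1|_{I_1=0}=\kappa\rho_1E$, $\dot I_2|_{I_2=0}=\kappa(1-\rho_1-\rho_2)E+mI_1\ge0$ using $\rho_1+\rho_2\le1$, $\dot V|_{V=0}=w_1S\ge0$, and similarly for $P,H,R,F$. To avoid the tangency issue at the boundary, I will use the integrating-factor form rather than a naive first-exit argument. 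Each equation can be written as $\dot x_j = g_j(t,x) - a_j(t,x)x_j$, where $g_j\ge0$ whenever $x\ge0$ and $a_j$ is locally bounded. For instance, for $S$ we have $a_S=\lambda(t)+\mu+w_1(t)$ with $\lambda$ the force of infection. Then
\[
x_j(t)=e^{-\int_0^t a_j}\,x_j(0)+\int_0^t e^{-\int_s^t a_j}\,g_j(s,x(s))\,ds .
\]
Making this rigorous requires $g_j\ge0$ along the trajectory, which is circular. I expect this to be the main technical obstacle. I will resolve it by the standard perturbation device: replace $\Lambda$ by $\Lambda+\varepsilon$ and add $\varepsilon$ to every equation. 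Then at a first time $t_0$ at which some component vanishes, that component has a strictly positive derivative, which is a contradiction because the component was positive just before $t_0$. So the perturbed solution stays strictly positive. Letting $\varepsilon\to0$ and using continuous dependence on parameters (valid by the same Lipschitz estimates on compact time intervals) gives $x(t)\ge0$. An equivalent route is to apply the modified vector field $f(t,x^+)$, show its solutions are nonnegative via the formula above, and then invoke uniqueness.

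For (ii) I will sum the first eight equations. All incidence terms cancel between $S,V$ and $E$. The transitions among $E$, $I_1$, $I_2$, $P$, $H$, $R$ and the waning flows $\psi V,\ \delta R$ also cancel, which leaves
\[
\dot N_{\mathrm{act}}=\Lambda-\mu N_{\mathrm{act}}-\delta_i(I_1+I_2)-\delta_pP-\delta_hH\le \Lambda-\mu N_{\mathrm{act}},
\]
where the inequality uses (i). Gronwall, or comparison with the linear ODE, then yields $N_{\mathrm{act}}(t)\le \Lambda/\mu+(N_{\mathrm{act}}(0)-\Lambda/\mu)e^{-\mu t}\le\max\{N_{\mathrm{act}}(0),\Lambda/\mu\}$. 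Together with $0\le F(t)\le F(0)+t\,(\max\delta)\,\max\{N_{\mathrm{act}}(0),\Lambda/\mu\}$, this confines the solution to a compact set on $[0,t^*)$. Hence no blow-up occurs, $t^*>T$, and the unique solution exists on all of $[0,T]$, which completes the proof.
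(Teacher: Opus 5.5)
Your proposal is correct and follows the same route as the paper: Carath\'eodory existence and uniqueness from local Lipschitz continuity, quasi-positivity (the Nagumo condition) on each face $\{x_j=0\}$, and summation of the eight active equations with Gronwall to bound $N_{\mathrm{act}}$ and extend the solution to $[0,T]$. Your $\varepsilon$-perturbation (or $f(t,x^+)$) device and your explicit linear-in-$t$ bound on $F$ make rigorous two steps the paper only cites (the ``standard comparison principle'') or leaves implicit (its global-extension argument controls $N_{\mathrm{act}}$ but not $F$); if you use the $\varepsilon$-device, also shift $x(0)$ by $\varepsilon$ and argue in integrated form, because some initial components may vanish and derivatives exist only almost everywhere.
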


\begin{proof}
\textit{Existence and uniqueness.} The right-hand side $f(x,u)$ is locally
Lipschitz in $x$ (bilinear terms $\beta(t)I_k/N$ are Lipschitz in $I_k$ since
$N$ is bounded away from zero). With $\beta, w_1\in L^\infty$, the
Carath\'{e}odory existence theorem and Gronwall's lemma guarantee a unique
absolutely continuous solution on a maximal interval $[0,T_{\max})$.

\textit{Non-negativity.} We verify the Nagumo condition on each boundary face
$\{x_i = 0\}$:
\begin{alignat*}{2}
\dot{S}\big|_{S=0} &= \Lambda + \psi V + \delta R \geq 0, &\quad
\dot{E}\big|_{E=0} &= [\text{infection force}](S+V)\geq 0,\\
\dot{I}_1\big|_{I_1=0} &= \kappa\rho_1 E\geq 0, &
\dot{I}_2\big|_{I_2=0} &= \kappa(1-\rho_1-\rho_2)E + mI_1\geq 0,\\
\dot{V}\big|_{V=0} &= w_1 S\geq 0, &
\dot{P}\big|_{P=0} &= \kappa\rho_2 E\geq 0,\\
\dot{H}\big|_{H=0} &= \gamma_a(I_1+I_2+P)\geq 0, &
\dot{R}\big|_{R=0} &= \gamma_i(I_1+I_2+P)+\gamma_r H+r_1 I_1+r_2 I_2\geq 0,\\
\dot{F}\big|_{F=0} &= \delta_i(I_1+I_2)+\delta_p P+\delta_h H\geq 0.
\end{alignat*}
(Non-negativity of $\dot{I}_2$ at $\{I_2=0\}$ uses $I_1\geq 0$ and
$\rho_1+\rho_2\leq 1$, which holds since $\rho_1=0.580$ and $\rho_2=0.001$.)
By the standard comparison principle \cite{khalil2002nonlinear},
$x(t)\geq 0$ for all $t\in[0,T_{\max})$.

\textit{Boundedness.} Summing the active compartments:
\begin{equation*}
\frac{dN_{\mathrm{act}}}{dt} = \Lambda - \mu N_{\mathrm{act}}
  - \underbrace{(\delta_i(I_1+I_2)+\delta_p P+\delta_h H)}_{\geq\,0}
  \leq \Lambda - \mu N_{\mathrm{act}}.
\end{equation*}
By Gronwall's inequality, $N_{\mathrm{act}}(t)\leq N_{\mathrm{act}}(0)e^{-\mu t}
+(\Lambda/\mu)(1-e^{-\mu t})\leq\max\{N_{\mathrm{act}}(0),\Lambda/\mu\}$,
extending the solution globally.
\end{proof}

\subsection{Basic Reproduction Number and Stability of the
Disease-Free Equilibrium}
\label{sec:R0}

\subsubsection*{Disease-free equilibrium}

Setting all infected compartments to zero and $w_1\equiv 0$, the disease-free
equilibrium (DFE) is
\begin{equation}
x_{\mathrm{DFE}} = (S^*,0,0,0,0,0,0,0,0)^\top,\qquad S^*=\frac{\Lambda}{\mu}=N .
\label{eq:DFE}
\end{equation}

\subsubsection*{Next-generation matrix and $\mathcal{R}_0$}

We apply the next-generation matrix (NGM) method of van den Driessche and
Watmough \cite{vandries2002} to the four infected compartments $(E,I_1,I_2,P)$.
To avoid notation conflict with the vaccinated compartment $V(t)$, we denote
the new-infection matrix by $\Tmat$ and the transition matrix by $\Smat$.

Evaluated at DFE~\eqref{eq:DFE} with $S^*/N=1$:
\begin{equation}
\Tmat = \beta_0
\begin{pmatrix}
0 & 1 & c_2 & c_P \\
0 & 0 & 0   & 0   \\
0 & 0 & 0   & 0   \\
0 & 0 & 0   & 0
\end{pmatrix},\qquad
\Smat =
\begin{pmatrix}
\mu{+}\kappa  & 0         & 0       & 0       \\
-\kappa\rho_1 & \alpha_1  & 0       & 0       \\
-\kappa(1{-}\rho_1{-}\rho_2) & -m & \alpha_2 & 0 \\
-\kappa\rho_2 & 0         & 0       & \alpha_P
\end{pmatrix},
\label{eq:FV}
\end{equation}
where $\beta_0=\beta(0)$ and
\begin{align}
\alpha_1 &:= \gamma_a+\gamma_i+\delta_i+m+r_1+\mu, \label{eq:alpha1}\\
\alpha_2 &:= \gamma_a+\gamma_i+\delta_i+r_2+\mu,   \label{eq:alpha2}\\
\alpha_P &:= \gamma_a+\gamma_i+\delta_p+\mu.         \label{eq:alphaP}
\end{align}
Since $\Tmat$ has only its first row non-zero, the NGM $\mathcal{K}=\Tmat\Smat^{-1}$
has only its first row non-zero, and $\mathcal{R}_0 = \rho(\mathcal{K}) =
(\mathcal{K})_{11}$.

\begin{theorem}[Basic reproduction number]
\label{thm:R0}
The basic reproduction number of system~\eqref{eq:system} is
\begin{equation}
\mathcal{R}_0 = \frac{\beta_0\kappa}{\mu+\kappa}
\left[
  \frac{\rho_1}{\alpha_1}
  + c_2\frac{(1-\rho_1-\rho_2)}{\alpha_2}
  + \frac{c_2 m\rho_1}{\alpha_1\alpha_2}
  + \frac{c_P\rho_2}{\alpha_P}
\right].
\label{eq:R0formula}
\end{equation}
The first term accounts for transmission by original-strain infectives $I_1$;
the second and third for Alpha-variant infectives $I_2$ (direct entry from $E$
and via competitive displacement from $I_1$); and the fourth for super-spreaders $P$.
\end{theorem}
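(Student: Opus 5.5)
We apply the van den Driessche--Watmough construction directly to the matrices $\Tmat$ and $\Smat$ in \eqref{eq:FV}. First we justify that these matrices are the correct linearization. Near the DFE \eqref{eq:DFE} we have $S=S^*=N$, $V=0$ and $w_1\equiv 0$. New infections enter only through $E$ at rate $\beta_0(I_1+c_2 I_2+c_P P)S^*/N$, where the scaling \eqref{eq:scaling} is used. The breakthrough term $(1-\sigma)[\cdots]V$ vanishes to first order, because it is bilinear in $V$ and the infectives. All remaining transfers among $(E,I_1,I_2,P)$ are linear, with exit rates $\mu+\kappa$, $\alpha_1$, $\alpha_2$, $\alpha_P$ as in \eqref{eq:alpha1}--\eqref{eq:alphaP}. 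The only cross-transfers are $\kappa\rho_1$, $\kappa(1-\rho_1-\rho_2)$, $\kappa\rho_2$ out of $E$ and $m$ from $I_1$ to $I_2$.

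Next we check the hypotheses (A1)--(A5) of \cite{vandries2002}. $\Tmat\ge 0$. $\Smat$ is lower triangular with positive diagonal and nonpositive off-diagonal entries, so it is a nonsingular M-matrix and $\Smat^{-1}\ge 0$. The disease-free subsystem, i.e.\ the $(S,V,H,R,F)$ dynamics with zero infection, has the stable equilibrium $S^*$ under $\Lambda=\mu N$. The threshold $\mathcal{R}_0=\rho(\Tmat\Smat^{-1})$ is then well defined.

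The computation reduces to the first column of $\Smat^{-1}$, obtained by forward substitution on $\Smat y=e_1$:
\begin{align*}
y_1 &= \frac{1}{\mu+\kappa}, &
y_2 &= \frac{\kappa\rho_1}{(\mu+\kappa)\alpha_1},\\
y_3 &= \frac{\kappa(1-\rho_1-\rho_2)+m\,y_2(\mu+\kappa)}{(\mu+\kappa)\alpha_2}, &
y_4 &= \frac{\kappa\rho_2}{(\mu+\kappa)\alpha_P}.
\end{align*}
Since only the first row of $\Tmat$ is nonzero, $\mathcal{K}=\Tmat\Smat^{-1}$ has the same structure. Its characteristic polynomial is therefore $\lambda^3(\lambda-\mathcal{K}_{11})$, so $\rho(\mathcal{K})=\mathcal{K}_{11}=\beta_0(y_2+c_2y_3+c_Py_4)$. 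The $(1,1)$ entry of $\Tmat$ is zero, so $y_1$ does not enter. Expanding $y_3$ as
\[
y_3=\frac{\kappa(1-\rho_1-\rho_2)}{(\mu+\kappa)\alpha_2}+\frac{m\kappa\rho_1}{(\mu+\kappa)\alpha_1\alpha_2}
\]
and factoring out $\beta_0\kappa/(\mu+\kappa)$ gives exactly the four terms of \eqref{eq:R0formula}. Each term can then be read off as a branch of the transmission tree: $E\to I_1$, $E\to I_2$, $E\to I_1\to I_2$, and $E\to P$.

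The main obstacle is not the algebra but the justification step. We must argue that the $V$-breakthrough pathway contributes nothing at the DFE. This requires taking the DFE with $V^*=0$, hence the convention $w_1\equiv 0$. We must also interpret $\beta_0=\beta(0)$ as a frozen constant, so that the autonomous NGM theory applies. We would state these conventions explicitly. We would also remark that with $w_1>0$ constant the same computation goes through with $\beta_0$ replaced by $\beta_0(S^*+(1-\sigma)V^*)/N$.
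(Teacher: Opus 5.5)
Your proposal is correct and follows the paper's proof: like the paper, you compute the first column of the lower-triangular $\Smat^{-1}$ by forward substitution, which reproduces exactly the paper's entries $(\Smat^{-1})_{21},(\Smat^{-1})_{31},(\Smat^{-1})_{41}$, and then read off $\mathcal{R}_0=(\Tmat\Smat^{-1})_{11}$ from the rank-one structure of the next-generation matrix. Your extra justifications (the linearization at the DFE with $w_1\equiv 0$ and $V^*=0$, and $\rho(\mathcal{K})=\mathcal{K}_{11}$ via the characteristic polynomial) only make explicit what the paper states in the text preceding the theorem.
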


\begin{proof}
$\Smat$ is lower triangular, so $\Smat^{-1}$ is also lower triangular.
The relevant entries of $\Smat^{-1}$ are:
\begin{align*}
(\Smat^{-1})_{21} &= \frac{\kappa\rho_1}{(\mu+\kappa)\alpha_1},\\
(\Smat^{-1})_{31} &= \frac{\kappa\bigl[(1-\rho_1-\rho_2)\alpha_1+m\rho_1\bigr]}
  {(\mu+\kappa)\alpha_1\alpha_2},\\
(\Smat^{-1})_{41} &= \frac{\kappa\rho_2}{(\mu+\kappa)\alpha_P}.
\end{align*}
Substituting into $\mathcal{R}_0 = (\Tmat\Smat^{-1})_{11} = \beta_0\times
[(\Smat^{-1})_{21}+c_2(\Smat^{-1})_{31}+c_P(\Smat^{-1})_{41}]$
yields~\eqref{eq:R0formula}.
\end{proof}

\begin{remark}[Population-level interpretation of the displacement term]
\label{rem:mutation_R0}
The term $c_2 m\rho_1/(\alpha_1\alpha_2)$ in~\eqref{eq:R0formula} represents
the contribution to $\mathcal{R}_0$ from Alpha-variant infectives generated by
competitive displacement from the original-strain pool. The rate $m$ is
interpreted at the \emph{population level} as the observed rate at which the
Alpha-dominant epidemiological regime replaced the ancestral-strain regime in the
Italian population during January--March 2021 (weeks), consistent with Can et al.\
\cite{can2024mathematical}. In the NGM framework, the $mI_1$ pathway is admissible
because all displaced infections ultimately trace back to an original exposure
event in $E$; the resulting $\mathcal{R}_0$ encodes both direct ($E\to I_2$) and
displacement-mediated ($I_1\to I_2$) Alpha-variant transmission pathways. This
avoids the within-host/between-host scale-mixing problem: $m$ does not represent
within-host viral evolution (operating on hours to days).
\end{remark}

\subsubsection*{Numerical evaluation}

Substituting the calibrated values from Table~\ref{tab:parameters} gives
$\alpha_1\approx 0.360$, $\alpha_2\approx 0.355$, $\alpha_P\approx 0.305$, and
the bracket $\approx 3.420$~day. Therefore:
\begin{align}
\mathcal{R}_0\big|_{\beta_0=0.212} &\approx 0.723
  \quad\text{(January 1, 2021 -- declining second wave),}\label{eq:R0_jan}\\
\mathcal{R}_0\big|_{\beta_0=0.366} &\approx 1.250
  \quad\text{(March 2021 -- Alpha-driven growth).}\label{eq:R0_mar}
\end{align}

\subsubsection*{Effective reproduction number under vaccination}

The effective reproduction number is
\begin{equation}
\mathcal{R}_{\mathrm{eff}}(t)
= \mathcal{R}_0(t)\times\frac{S(t)+(1-\sigma)V(t)}{N},
\label{eq:Reff}
\end{equation}
where $\mathcal{R}_0(t)$ denotes~\eqref{eq:R0formula} evaluated with $\beta(t)$.
This dominant-eigenvalue approximation is accurate when $S(t)/N$ is close to $S^*/N=1$. At $t=0$ (January 1, 2021), the calibrated initial conditions give
$S(0)/N\approx 0.935$, bounding the linearization error at $\approx 6.5\%$.

\subsubsection*{DFE stability}

\begin{theorem}[Stability of the DFE]
\label{thm:stability}
The DFE~\eqref{eq:DFE} of system~\eqref{eq:system} is locally asymptotically
stable when $\mathcal{R}_0<1$ and unstable when $\mathcal{R}_0>1$.
\end{theorem}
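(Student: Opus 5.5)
The plan is to invoke Theorem~2 of van den Driessche and Watmough \cite{vandries2002}: once its hypotheses (A1)--(A5) are verified for system~\eqref{eq:system} at the DFE~\eqref{eq:DFE}, local asymptotic stability for $\mathcal{R}_0<1$ and instability for $\mathcal{R}_0>1$ follow directly, with $\mathcal{R}_0=\rho(\Tmat\Smat^{-1})$ given by Theorem~\ref{thm:R0}. As in the construction of~\eqref{eq:DFE}, the analysis is carried out for the autonomous system with $\beta\equiv\beta_0$ and $w_1\equiv 0$. I order the state as $x=(E,I_1,I_2,P,S,V,H,R,F)$.

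\textbf{Step 1 (hypotheses A1--A4).} I split the infected-compartment dynamics as $\mathcal{F}-\mathcal{V}$. Here $\mathcal{F}$ is the new-infection term $\lambda(S+(1-\sigma)V)$ in~\eqref{eq:E}, and $\mathcal{V}$ collects the transition terms. Non-negativity of $\mathcal{F}$, of $\mathcal{V}^-$ and of the outflows, together with $\mathcal{V}_i=0$ on $\{x_i=0\}$, follows from the sign pattern already displayed in the proof of Proposition~\ref{prop:wellposed} and from $\rho_1+\rho_2\le1$. Invariance of the disease-free set is immediate because $\mathcal{F}=0$ whenever $I_1=I_2=P=0$. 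The linearizations $D\mathcal{F}=\Tmat$ and $D\mathcal{V}=\Smat$ are exactly~\eqref{eq:FV}, using $S^*/N=1$ and $V^*=0$. The matrix $\Smat$ is a nonsingular M-matrix, since it is lower triangular with positive diagonal $(\mu+\kappa,\alpha_1,\alpha_2,\alpha_P)$ and nonpositive off-diagonal entries.

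\textbf{Step 2 (hypothesis A5).} I must check that the Jacobian of the non-infected subsystem $(S,V,H,R,F)$ at the DFE, with $\mathcal{F}=0$, has eigenvalues with negative real part. This is the main obstacle, because $F$ is absorbing and the $F$-row of that Jacobian is identically zero in the $(S,V,H,R,F)$ variables. The Jacobian therefore has a zero eigenvalue, and (A5) fails as literally stated. I resolve this by noting that $F$ does not appear on the right-hand side of any other equation, so~\eqref{eq:F} decouples. Stability is then asserted for the reduced eight-dimensional system in $(S,E,I_1,I_2,V,P,H,R)$, with $F$ recovered by quadrature; for the full state it holds only up to the trivial neutral direction in $F$. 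I would state this interpretation explicitly, possibly as a remark.

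For the reduced non-infected block with $w_1=0$, the Jacobian is upper triangular in the ordering $(S,V,R,H)$. Its diagonal entries are $-\mu$, $-(\mu+\psi)$, $-(\mu+\delta)$ and $-(\gamma_r+\delta_h+\mu)$, all negative. The coupling from $H$ to $R$ enters only an off-diagonal position and does not change the spectrum.

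\textbf{Step 3 (conclusion).} The full Jacobian at the DFE is block triangular, with blocks $\Tmat-\Smat$ and the stable non-infected block. It follows that $s(\Tmat-\Smat)<0$ if and only if $\rho(\Tmat\Smat^{-1})<1$, by the M-matrix lemma in \cite{vandries2002}. Moreover $\Tmat-\Smat$ is essentially nonnegative with rank-one $\Tmat$, so an eigenvalue with positive real part appears exactly when $\mathcal{R}_0>1$. Combined with Theorem~\ref{thm:R0}, this yields both claims.
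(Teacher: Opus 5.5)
Your proposal follows the same route as the paper: verify the hypotheses of Theorem~2 of van den Driessche and Watmough with linearizations $\Tmat$ and $\Smat$, then conclude through the M-matrix lemma. Your treatment of the non-infected block, however, is more careful than the paper's and in fact corrects it. The paper checks that hypothesis only through $\dot S=\Lambda-\mu S$ and then cites ``uniqueness of the DFE'' as its fifth condition, never looking at $V$, $H$, $R$, $F$. As you observe, $F$ is absorbing and enters no other equation, so the non-infected Jacobian has a zero eigenvalue. Moreover, every point $(N,0,\dots,0,F_0)$ with $F_0\ge 0$ is a disease-free equilibrium. The DFE is therefore not isolated, the paper's uniqueness claim is false, and local asymptotic stability in all nine variables cannot hold. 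The instability half does survive in nine variables, since one eigenvalue with positive real part suffices. Your reduction to the eight-dimensional system, with $F$ recovered by quadrature, is the right repair. Your triangular computation in the order $(S,V,R,H)$, giving eigenvalues $-\mu$, $-(\mu+\psi)$, $-(\mu+\delta)$, $-(\gamma_r+\delta_h+\mu)$, supplies exactly the verification the paper omits. Making the autonomous setting $\beta\equiv\beta_0$, $w_1\equiv 0$ explicit is likewise appropriate, since the theorem statement leaves it implicit. One small slip: hypothesis (A2) asks for $\mathcal{V}_i^-=0$ on $\{x_i=0\}$, not $\mathcal{V}_i=0$, because the inflow terms $\mathcal{V}_i^+$ need not vanish there.
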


\begin{proof}
We verify conditions (A1)--(A5) of Theorem~2 of van den Driessche and Watmough
\cite{vandries2002}. \textit{A1}: New-infection terms $\mathcal{F}_i$ are
products of non-negative state variables divided by $N>0$.~\checkmark
\textit{A2}: If $x_i=0$, all outflow rates from compartment $i$ are zero.~\checkmark
\textit{A3}: New infections appear only in $E$; $\mathcal{F}_i=0$ for
$S,V,H,R,F$.~\checkmark \textit{A4}: At the DFE with $w_1\equiv 0$, the
uninfected subsystem reduces to $\dot{S}=\Lambda-\mu S$ with eigenvalue
$-\mu<0$.~\checkmark \textit{A5}: The DFE~\eqref{eq:DFE} is the unique
disease-free equilibrium.~\checkmark Since all five conditions hold and $\Smat$
is an M-matrix (positive diagonal entries $\alpha_1,\alpha_2,\alpha_P,\mu{+}\kappa$;
non-positive off-diagonal), the conclusion follows from Theorem~2 of
\cite{vandries2002}.
\end{proof}

\subsubsection*{Global stability of the DFE for the autonomous system}

\begin{proposition}[Global convergence of infected states]
\label{prop:global}
Consider system~\eqref{eq:system} with $\beta(t)\equiv\beta_0\geq 0$ and
$w_1(t)\equiv w_{1,0}\geq 0$ constant inputs. If $\mathcal{R}_0<1$, then
$E(t),I_1(t),I_2(t),P(t)\to 0$ exponentially as $t\to\infty$, and all solutions
in the invariant region converge to the DFE.
\end{proposition}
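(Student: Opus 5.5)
Since $\mathcal{R}_0=\rho(\Tmat\Smat^{-1})$, I would prove the statement with a linear comparison system for the infected block, a linear Lyapunov functional built from a left Perron eigenvector, and then a limiting-system argument for the uninfected compartments.

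\textbf{Step 1: comparison system.} Put $y=(E,I_1,I_2,P)^\top$. By Proposition~\ref{prop:wellposed}, $S(t)+V(t)\le N_{\mathrm{act}}(t)\le\max\{N_{\mathrm{act}}(0),\Lambda/\mu\}$. Since $\Lambda=\mu N$, we have $\Lambda/\mu=N$, so in the invariant region $\{N_{\mathrm{act}}\le N\}$ we get $S+(1-\sigma)V\le N$. Hence
\[
\dot y\le(\Tmat-\Smat)\,y
\]
componentwise, with $\Tmat$ evaluated at the constant $\beta_0$. The matrix $\Tmat-\Smat$ is Metzler, because its off-diagonal entries are $\beta_0,\beta_0c_2,\beta_0c_P\ge0$ and $\kappa\rho_1,\kappa(1-\rho_1-\rho_2),m,\kappa\rho_2\ge0$. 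Since $\Smat$ is a nonsingular M-matrix and $\Tmat\ge0$, the standard equivalence (Lemma~1 of \cite{vandries2002}) gives $s(\Tmat-\Smat)<0$ if and only if $\rho(\Tmat\Smat^{-1})<1$, i.e.\ $\mathcal{R}_0<1$. The Kamke comparison principle for cooperative linear systems then gives $0\le y(t)\le e^{(\Tmat-\Smat)t}y(0)$, so $y(t)\to0$ exponentially at rate $|s(\Tmat-\Smat)|$. Initial data with $N_{\mathrm{act}}(0)>N$ enter this region asymptotically, so there I would use $\limsup N_{\mathrm{act}}\le N$ and apply the argument after a transient.

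\textbf{Step 2: explicit Lyapunov functional.} To make the argument explicit I would use $L=\omega^\top\Smat^{-1}y$, where $\omega^\top$ is the left Perron vector of $\Smat^{-1}\Tmat$. As rank one, $\omega^\top$ is proportional to $e_1^\top$, so $L$ is a weighted sum $E+a_1I_1+a_2I_2+a_PP$ with weights given by the entries in the first row of $\beta_0\Smat^{-1}$ scaled suitably. This yields
\[
\dot L\le(\mathcal{R}_0-1)\,\omega^\top y\le-cL,
\]
which gives the same exponential decay.

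\textbf{Step 3: uninfected compartments.} Once $y\to0$ exponentially, the equations for $H,R,F$ are linear with exponentially decaying forcing. Therefore $H\to0$ and $R\to0$, since $\mu+\delta>0$, and $F$ converges. The $(S,V)$ subsystem is then asymptotically autonomous linear:
\[
\dot S=\Lambda+\psi V-(\mu+w_{1,0})S,\qquad \dot V=w_{1,0}S-(\mu+\psi)V,
\]
with vanishing perturbations. It has a unique globally stable equilibrium (Hurwitz matrix, trace negative and determinant $\mu(\mu+\psi+w_{1,0})>0$). By Markus's theorem on asymptotically autonomous systems, or directly by variation of constants, $(S,V)$ converges to it. When $w_{1,0}=0$ this is $(N,0)$, the DFE~\eqref{eq:DFE}. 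For $w_{1,0}>0$ the limit is the vaccinated disease-free state $S^\dagger=N(\mu+\psi)/(\mu+\psi+w_{1,0})$, $V^\dagger=w_{1,0}S^\dagger/(\mu+\psi)$. I would note this, and the fact that the threshold $\mathcal{R}_0<1$ is then sufficient but conservative, since $S^\dagger+(1-\sigma)V^\dagger<N$.

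\textbf{Main obstacle.} The delicate step is Step~1's bound $S+(1-\sigma)V\le N$ for all $t$. It holds only in the invariant region, or asymptotically, so I would state the result for solutions in $\{N_{\mathrm{act}}\le N\}$ and handle the transient with a $\limsup$ and $\varepsilon$-perturbation argument. Specifically, $\mathcal{R}_0(1+\varepsilon)<1$ for small $\varepsilon$, and one applies Step~1 from a time $t_\varepsilon$ after which $N_{\mathrm{act}}\le N(1+\varepsilon)$.
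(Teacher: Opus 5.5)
Your Step~1 is the paper's proof of the infected-block decay. It uses the same comparison $\dot y\le(\Tmat-\Smat)y$ obtained from $S+(1-\sigma)V\le N_{\mathrm{act}}\le N$, the same appeal to Lemma~1 of \cite{vandries2002}, and the comparison principle for the Metzler matrix $\Tmat-\Smat$. Your $\varepsilon$-argument for $N_{\mathrm{act}}(0)>N$ harmlessly extends the paper's restriction to the invariant region.

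Step~3 is where you genuinely depart from the paper, and your version is the correct one. The paper asserts $\psi V(t)\to0$ and reduces the $S$-equation to $\dot S\to\Lambda-\mu S$. This silently drops the $-w_{1,0}S$ and $\psi V$ terms and is valid only when $w_{1,0}=0$. Your treatment of the Hurwitz $(S,V)$ block as an asymptotically autonomous linear system gives the limit $(S^\dagger,V^\dagger)$ with $V^\dagger=Nw_{1,0}/(\mu+\psi+w_{1,0})>0$. So the proposition's final clause, convergence to $x_{\mathrm{DFE}}$ in \eqref{eq:DFE}, holds as stated only for $w_{1,0}=0$. You should state your conclusion in that corrected form rather than as an aside. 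Do the same for $F$: it is nondecreasing with limit $F_\infty\ge F(0)$, which is generally positive. Convergence to the DFE therefore has to be read modulo the absorbing $F$-coordinate, a point neither the statement nor the paper's proof addresses.

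Step~2, however, is wrong as written. Let $t:=(0,1,c_2,c_P)^\top$, so that $\Tmat=\beta_0e_1t^\top$ and $\Smat^{-1}\Tmat=\beta_0(\Smat^{-1}e_1)t^\top$. The left Perron vector of this matrix is $t^\top$. The vector $e_1^\top$ is instead the left eigenvector of $\Tmat\Smat^{-1}$, and with $\omega=e_1$ your $L$ would collapse to $E/(\mu+\kappa)$, because $\Smat^{-1}$ is lower triangular. With the correct $\omega=t$ you get $\dot L\le(\mathcal{R}_0-1)(I_1+c_2I_2+c_PP)$. At states with $E>0$ and $I_1=I_2=P=0$ (and $\beta_0>0$) one has $\dot y=-\Smat y$ exactly, so $\dot L=-t^\top y=0$ while $L>0$. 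Hence $\dot L\le-cL$ fails, and this construction only gives $\dot L\le 0$.

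Nothing else depends on Step~2, since Step~1 already gives the exponential rate, so you can simply drop it. If you want an explicit Lyapunov function, take $v^\top:=\mathbf{1}^\top(\Smat-\Tmat)^{-1}$. This vector is strictly positive because $\Smat-\Tmat$ is a nonsingular M-matrix when $\mathcal{R}_0<1$, so its inverse is nonnegative with no zero column. Then $L=v^\top y$ satisfies $\dot L\le v^\top(\Tmat-\Smat)y=-\mathbf{1}^\top y\le-L/\max_i v_i$.
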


\begin{proof}
Let $y=(E,I_1,I_2,P)^\top$. Since $S(t)\leq N_{\mathrm{act}}(t)\leq N$ and
$V(t)\leq N$ (Proposition~\ref{prop:wellposed}), the total force of infection
satisfies:
\[
\lambda_S+(1-\sigma)\lambda_V
= \beta_0\!\left(\frac{I_1+c_2 I_2+c_P P}{N}\right)\!\bigl[S+(1-\sigma)V\bigr]
\leq \beta_0(I_1+c_2 I_2+c_P P),
\]
using $S+(1-\sigma)V\leq N_{\mathrm{act}}\leq N$. Therefore
$\dot{y}\leq(\Tmat-\Smat)\,y =: \mathbf{A}\,y$.
The matrix $\mathbf{A}=\Tmat-\Smat$ is essentially non-negative. By Lemma~1 of
\cite{vandries2002}, all eigenvalues of $\mathbf{A}$ have negative real parts if
and only if $\mathcal{R}_0=\rho(\Tmat\Smat^{-1})<1$. Since $\mathcal{R}_0<1$
by hypothesis, $\mathbf{A}$ is stable and $e^{\mathbf{A}t}\leq Me^{-\gamma t}$
componentwise for some $M,\gamma>0$. The comparison theorem gives
$y(t)\leq Me^{-\gamma t}y(0)\to 0$ exponentially.

Once $y(t)\to 0$, the waning inflows to $S$ satisfy $\psi V(t)\to 0$ and
$\delta R(t)\to 0$, so the $S$-equation reduces to $\dot{S}\to\Lambda-\mu S$,
whose unique equilibrium is $S^*=\Lambda/\mu$. The equations for $H$ and $R$
similarly reduce to $\dot{H}\to -(\gamma_r+\delta_h+\mu)H$ and
$\dot{R}\to-(\mu+\delta)R$, both exponentially stable at zero. Therefore all
components converge to $x_{\mathrm{DFE}}$ by the cascade structure.
\end{proof}

\subsubsection*{Epidemic decay under time-varying suppression}

Proposition~\ref{prop:global} resolves global stability for the autonomous system.
The following result provides a partial resolution for the time-varying case,
motivated by the need to handle time-varying control inputs rigorously.

\begin{proposition}[Epidemic decay under time-varying suppression]
\label{prop:tv_decay}
Consider system~\eqref{eq:system} with $\beta(t)\in L^\infty([0,\infty);\mathbb{R}_+)$
and $w_1(t)\in L^\infty([0,\infty);\mathbb{R}_+)$. Suppose there exist $T_0\geq 0$
and $\rho\in(0,1)$ such that $\mathcal{R}_{\mathrm{eff}}(t)\leq\rho<1$ for all
$t\geq T_0$. Then there exist $M,\gamma>0$ such that
\begin{equation}
\bigl\|(E(t),I_1(t),I_2(t),P(t))\bigr\|_\infty
\leq M\,e^{-\gamma(t-T_0)}\,
\bigl\|(E(T_0),I_1(T_0),I_2(T_0),P(T_0))\bigr\|_\infty
\quad\forall\,t\geq T_0,
\label{eq:tv_decay_bound}
\end{equation}
and all solutions converge to the DFE as $t\to\infty$. This sufficient threshold
condition shows that $\mathcal{R}_{\mathrm{eff}}(t)\leq\rho<1$ uniformly is enough
to guarantee epidemic termination; the full ISS characterization for general
time-varying inputs remains an open problem.
\end{proposition}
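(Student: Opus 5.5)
The plan is to work directly with the infected subsystem $y=(E,I_1,I_2,P)^\top$ and to build a linear copositive Lyapunov function from the left row of the next-generation matrix. This replaces the spectral argument of Proposition~\ref{prop:global}, which is unavailable for time-varying $\mathbf{A}(t)$.

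Write $b=(0,1,c_2,c_P)^\top$ and $s(t)=(S(t)+(1-\sigma)V(t))/N\in[0,1]$; the bound $s\le 1$ follows from Proposition~\ref{prop:wellposed}. By \eqref{eq:scaling} the infected block is exactly
\[
\dot y=\bigl(\eta(t)\,e_1 b^\top-\Smat\bigr)y,\qquad \eta(t):=\beta(t)s(t),
\]
where $\Smat$ is the matrix in \eqref{eq:FV}. Define the row vector $v^\top:=b^\top\Smat^{-1}$. Because $\Smat$ is a lower-triangular M-matrix, $\Smat^{-1}\ge 0$ with positive diagonal. Hence $v\ge 0$, with $v_j\ge b_j/\alpha_j>0$ for $j=2,3,4$, and the proof of Theorem~\ref{thm:R0} identifies $v_1=(\Smat^{-1})_{21}+c_2(\Smat^{-1})_{31}+c_P(\Smat^{-1})_{41}$, so that $\beta(t)v_1=\mathcal{R}_0(t)$. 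Consequently $\eta(t)v_1=\mathcal{R}_{\mathrm{eff}}(t)$ by \eqref{eq:Reff}, and for a.e.\ $t\ge T_0$
\[
\tfrac{d}{dt}\,v^\top y=\eta v_1\,b^\top y-b^\top y=(\mathcal{R}_{\mathrm{eff}}(t)-1)\,b^\top y\le-(1-\rho)\,b^\top y .
\]

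This function alone does not control $E$, since $b_1=0$. This is the main obstacle, and I would handle it with the augmented function $L(y)=v^\top y+\varepsilon E$. Let $\bar\beta=\|\beta\|_{L^\infty}$. The $E$-equation gives $\dot E\le\bar\beta\,b^\top y-(\mu+\kappa)E$, and therefore
\[
\dot L\le-(1-\rho-\varepsilon\bar\beta)\,b^\top y-\varepsilon(\mu+\kappa)E .
\]
Choose $\varepsilon=(1-\rho)/(2\bar\beta)$ (any $\varepsilon>0$ works if $\bar\beta=0$). Then $\dot L\le-c\,(b^\top y+E)$ for some $c>0$. On the nonnegative orthant (Proposition~\ref{prop:wellposed}), $L$, $b^\top y+E$ and $\|y\|_\infty$ are mutually equivalent. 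This uses the positivity of $v_2,v_3,v_4$ and of $\varepsilon$. We obtain $\dot L\le-\gamma L$ with $\gamma>0$ depending only on $\rho$, $\bar\beta$ and the fixed rates. Gr\"onwall's inequality, applied to the absolutely continuous function $L(y(t))$, together with norm equivalence, yields \eqref{eq:tv_decay_bound}, with $M$ equal to the ratio of the equivalence constants.

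For convergence of the remaining states I would follow the cascade argument of Proposition~\ref{prop:global}. First, $H$ and $R$ satisfy stable linear equations driven by the exponentially decaying $y$, so they tend to $0$. Next, $F$ converges because its derivative is integrable. Finally, $S$ and $V$ obey a linear system with bounded coefficients forced by inputs that vanish asymptotically. When $w_1\equiv 0$ on $[T_0,\infty)$ this gives $S\to\Lambda/\mu$ and $V\to 0$, which is the DFE \eqref{eq:DFE}. For general $w_1$ it gives convergence to the disease-free set $\{y=0,\ H=R=0\}$, on which $S+V\to\Lambda/\mu$. I would state explicitly that this is the sense of ``converge to the DFE'' when vaccination persists.
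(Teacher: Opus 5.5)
Your proof is correct, and it takes a genuinely different route from the paper's.

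\textbf{The paper's route.} The paper never constructs a Lyapunov function. It observes that the hypothesis forces $\eta(t)\le c_{\max}:=\rho/v_1$ on $[T_0,\infty)$ (in your notation). It then bounds $\dot y\le(c_{\max}\Tnorm-\Smat)\,y=:\mathbf{A}^*y$ componentwise. Lemma~1 of \cite{vandries2002} shows that $\mathbf{A}^*$ is Hurwitz, since $\rho(c_{\max}\Tnorm\Smat^{-1})=\rho<1$. Finally, the comparison theorem for cooperative systems gives $y(t)\le e^{\mathbf{A}^*(t-T_0)}y(T_0)$. Given the cited machinery this is shorter, and it yields a componentwise bound whose rate is fixed by the spectral abscissa of $\mathbf{A}^*$.

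\textbf{Your route.} Your copositive function $v^\top y+\varepsilon E$, with $v^\top=b^\top\Smat^{-1}$, needs only $\Smat^{-1}\ge 0$ and scalar Gr\"onwall, and it gives explicit constants. Its exact identity $\tfrac{d}{dt}v^\top y=(\mathcal{R}_{\mathrm{eff}}-1)\,b^\top y$ is a dissipation inequality of the kind the ISS question left open by the paper would require. You can also drop both $\bar\beta$ and the bound $s\le 1$. The hypothesis itself gives $\eta(t)\le\rho/v_1$ for $t\ge T_0$, so $\varepsilon=(1-\rho)v_1/(2\rho)$ works, and then $\gamma$ depends only on $\rho$ and the fixed rates.

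\textbf{The convergence claim.} Your caution here is justified and goes beyond the paper, which only cites the cascade argument of Proposition~\ref{prop:global}. That argument asserts $\psi V(t)\to 0$, which fails whenever $w_1$ does not vanish. In general, therefore, only convergence to the disease-free set, with $S+V\to\Lambda/\mu$, holds. Likewise $F$ tends to a finite limit $F_\infty\ge F(T_0)$ rather than to the DFE value $0$. So even for $w_1\equiv 0$, the convergence holds only up to the absorbing coordinate.
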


\begin{proof}
Define $\Tnorm := \Tmat/\beta_0$ (the new-infection matrix normalized by the
positive scalar $\beta_0$, not to be confused with the time threshold~$T_0$) and let $c(t):=\beta(t)(S(t)+(1-\sigma)V(t))/N$. The comparison system for
$y=(E,I_1,I_2,P)^\top$ is $\dot{y}\leq\mathbf{A}(t)\,y$, where
$\mathbf{A}(t):=c(t)\Tnorm-\Smat$.
From the NGM construction, $\rho(\Tnorm\Smat^{-1})=\mathcal{R}_0/\beta_0$,
and by definition~\eqref{eq:Reff} of $\mathcal{R}_{\mathrm{eff}}$:
\[
c(t)\cdot\rho(\Tnorm\Smat^{-1})
= c(t)\cdot\frac{\mathcal{R}_0}{\beta_0}
= \mathcal{R}_{\mathrm{eff}}(t)
\leq \rho < 1
\quad\forall\,t\geq T_0.
\]
Hence $c(t)\leq c_{\max}:=\rho/\rho(\Tnorm\Smat^{-1})$ for all $t\geq T_0$.
Define the constant comparison matrix $\mathbf{A}^*:=c_{\max}\Tnorm-\Smat$.
Since $\Tnorm\geq 0$ componentwise and $c(t)\leq c_{\max}$, we have
$\mathbf{A}(t)\leq\mathbf{A}^*$ componentwise for all $t\geq T_0$.

By Lemma~1 of \cite{vandries2002} applied to $(c_{\max}\Tnorm,\Smat)$:
$\rho(c_{\max}\Tnorm\Smat^{-1})=c_{\max}\cdot\rho(\Tnorm\Smat^{-1})=\rho<1$,
so $s(\mathbf{A}^*)=:\delta<0$. The matrix $\mathbf{A}^*$ is essentially
non-negative, hence $\|e^{\mathbf{A}^*t}\|\leq Me^{-\gamma t}$ with
$\gamma=|\delta|/2>0$. By the ODE comparison theorem for type-K (cooperative)
monotone systems \cite{khalil2002nonlinear}, $y(t)\leq e^{\mathbf{A}^*(t-T_0)}y(T_0)
\leq Me^{-\gamma(t-T_0)}y(T_0)$, establishing~\eqref{eq:tv_decay_bound}.
Convergence to the DFE follows by the cascade argument of
Proposition~\ref{prop:global}.
\end{proof}

\begin{remark}[Calibrated model verification]
\label{rem:tv_application}
The threshold condition of Proposition~\ref{prop:tv_decay} applies directly to
the calibrated Third Wave. The combination of the declining transmission rate
(Section~\ref{sec:results}, from $\beta=0.405$~day$^{-1}$ at the April peak to
the post-April plateau) and the growing vaccinated fraction ($V(t)/N$ accelerating
through May) drives $\mathcal{R}_{\mathrm{eff}}(t)$ below $1$ for $t\geq T_0\approx$
early May 2021. The threshold condition is verified \emph{a posteriori} by the
calibrated trajectories, confirming that the PCHIP-identified $\beta(t)$ trajectory
guarantees epidemic termination in addition to fitting the data.
\end{remark}

\begin{remark}[Remaining open problem]
\label{rem:tv_open}
Proposition~\ref{prop:tv_decay} provides a sufficient threshold condition for
epidemic termination. The full input-to-state stability (ISS) characterization ---
proving that small perturbations of the time-varying inputs $\beta(t)$, $w_1(t)$
produce proportionally small perturbations in the epidemic trajectory --- remains
open and requires the full ISS framework of \cite{grune2017nonlinear}.
\end{remark}

\section{Data and Identification Strategy}
\label{sec:data}

\subsection{Dataset and Calibration Window}

The model is calibrated against official Italian epidemiological data made
publicly available through the national Civil Protection Department
\cite{italycivpro2021,giordano2020modelling}. Figure~\ref{fig:full_data} shows
the full dataset.

\begin{figure}[htbp]
  \centering
  \includegraphics[width=\textwidth]{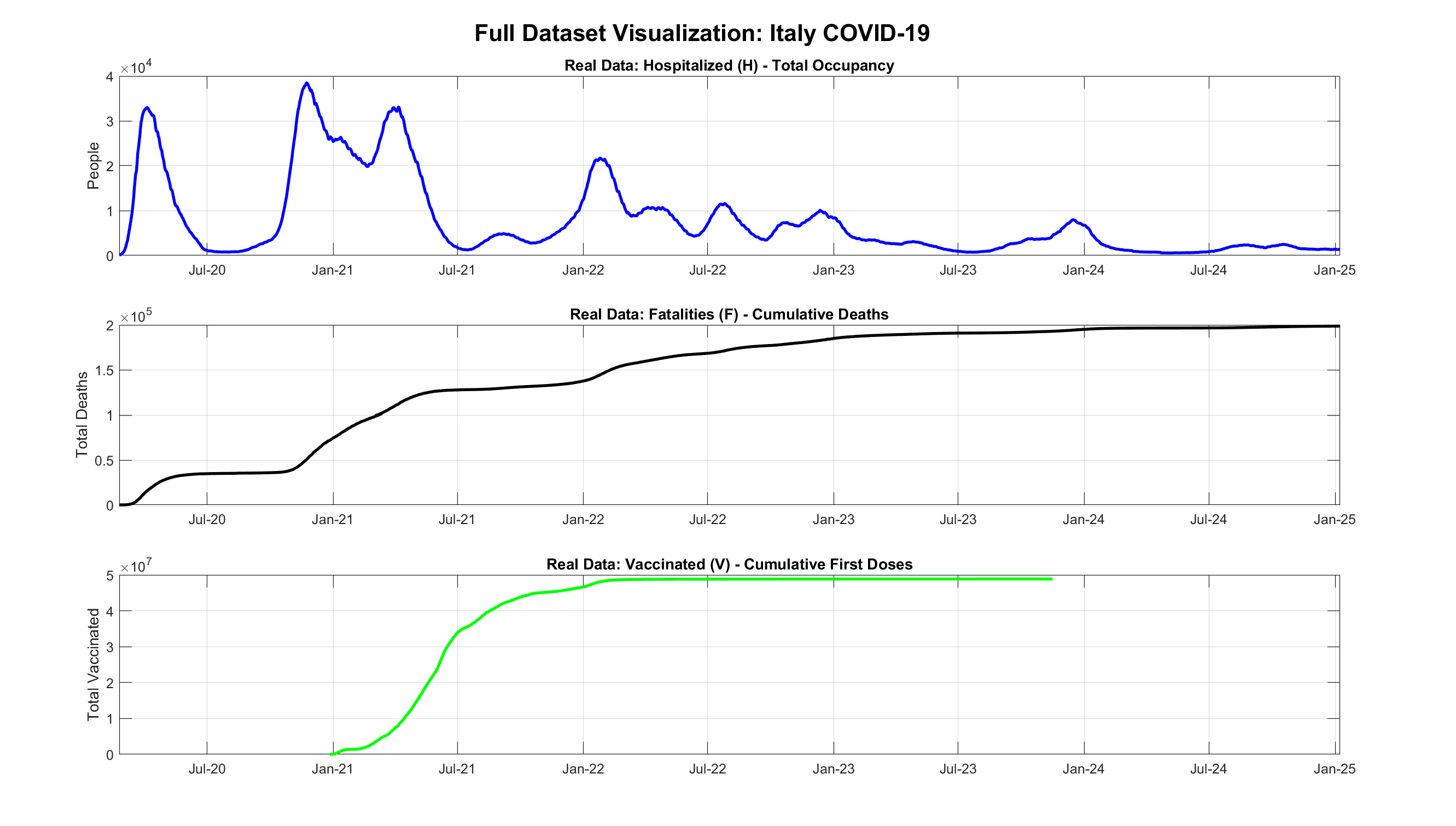}
  \caption{COVID-19 pandemic trajectory in Italy (February 2020 to January 2025).
    Daily active hospitalizations ($H$), cumulative fatalities ($F$), and
    cumulative vaccinations ($V$) across three panels. The shaded region marks
    the calibration window (January 1 to May 30, 2021) corresponding to the
    Third Wave and the initial mass vaccination rollout.}
  \label{fig:full_data}
\end{figure}

We restrict calibration to the window \textbf{January 1 to May 30, 2021} for
three strategic reasons. The first is \textit{Alpha variant dominance}: the
B.1.1.7 lineage had become dominant in Italy by late January 2021, so the
calibration window captures the full dynamics of an Alpha-driven epidemic
\cite{giordano2021modeling}. The second is the \textit{vaccination ramp-up}:
Italy's national campaign expanded progressively from healthcare workers (January)
to the general adult population (May), producing a non-linear increase in daily
vaccination rate that a static $w_1$ cannot represent \cite{watson2022global}.
The third is \textit{data consistency}: testing protocols, hospitalization
admission criteria, and death attribution rules were relatively stable within
this window \cite{ramos2021modeling}.

\subsubsection*{Measured outputs}
Three observables are extracted from the calibration window. The active
hospitalized count $H_\text{obs}(t)$ is smoothed via a 7-day centred moving
average to remove weekend reporting artifacts \cite{giordano2020modelling}. The
cumulative fatality count $F_\text{obs}(t)$ is the most consistent long-run
measure of epidemic severity \cite{flaxman2020estimating}. The cumulative
vaccination count $V_\text{obs}(t)$ is smoothed via linear interpolation.
Together these three quantities span distinct temporal and biological regimes:
$H_\text{obs}$ tracks near-term severity, $F_\text{obs}$ integrates epidemic
history, and $V_\text{obs}$ constrains the immunization trajectory.

\subsection{Structural Identifiability}
\label{sec:identifiability}

\begin{theorem}[Structural identifiability of constant-input sub-model]
\label{thm:identifiability}
Consider system~\eqref{eq:system} with $\beta(t)\equiv\beta_0$,
$w_1(t)\equiv w_{1,0}$ (constants), all fixed biological parameters as in
Table~\ref{tab:parameters}, and the three-output map $h(x)=[H(t),F(t),V(t)]^\top$.
The five-parameter sub-vector $\vartheta=(\beta_0,w_{1,0},\delta_h,I_{0\text{-f}},
E_{0\text{-f}})$ is \emph{locally structurally identifiable} from the output
triple $(H,F,V)$ on a generic initial condition.
\end{theorem}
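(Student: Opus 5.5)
The plan is to establish local structural identifiability by the standard rank test. We show that the map from $\vartheta$ to the output trajectory $(H,F,V)$ has an injective differential at a generic point. Concretely, we use the Taylor-series / observability-rank approach. We compute the output derivatives $h^{(k)}(0)$, for $k=0,1,2,\dots$, as functions of $\vartheta$, and exhibit a finite set of these whose Jacobian with respect to $\vartheta$ has rank five at a generic initial state. The initial state $x(0)$ depends on $\vartheta$ only through $I_{1,0}=H_0 I_{0\text{-f}}$ and $E_0=I_{1,0}E_{0\text{-f}}$, with $H_0,F_0,V_0$ observed. An equivalent formulation, which matches the Fisher-information language used later, is to show that the sensitivity matrix $\partial h(x(t;\vartheta))/\partial\vartheta$ has full column rank on some finite set of times. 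By analyticity of the right-hand side of \eqref{eq:system} in $(x,\vartheta)$ (the rates are constant and the incidence is bilinear with fixed $N$), rank five at one point implies rank five on an open dense set. This is what ``generic'' means in the statement.

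The key steps exploit the triangular structure of the outputs.

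\emph{First}, the $V$ output decouples the vaccination parameter. We have $\dot V(0)=w_{1,0}S_0-(1-\sigma)\lambda_0V_0-(\mu+\psi)V_0$. Its derivative with respect to $w_{1,0}$ is $S_0>0$, and $w_{1,0}$ appears in no other first-order output derivative. This gives a pivot in the $w_{1,0}$ column.

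\emph{Second}, the $F$ output gives $\dot F(0)=\delta_i(I_{1,0}+I_{2,0})+\delta_pP_0+\delta_hH_0$, and the $H$ output gives $\dot H(0)=\gamma_a(I_{1,0}+I_{2,0}+P_0)-(\gamma_r+\delta_h+\mu)H_0$. Here $I_{1,0}=H_0I_{0\text{-f}}$, and $I_{2,0},P_0$ are fixed data or fixed functions of $I_{1,0}$. These two equations form a $2\times2$ linear system in $(\delta_h,I_{0\text{-f}})$ with coefficient matrix
\[
\begin{pmatrix}-H_0 & \gamma_a H_0 c'\\ H_0 & \delta_i H_0 c'\end{pmatrix},
\]
where $c'>0$ collects the dependence of the infective total on $I_{0\text{-f}}$. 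Its determinant is $-H_0^2c'(\delta_i+\gamma_a)\neq0$.

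\emph{Third}, $E_{0\text{-f}}$ first enters at second order. Indeed, $\ddot H(0)$ contains $\gamma_a\kappa(1-\rho_1-\rho_2+\rho_1+\rho_2)E_0=\gamma_a\kappa E_0$, which depends on $E_{0\text{-f}}$ through $E_0=H_0I_{0\text{-f}}E_{0\text{-f}}$ with nonzero coefficient $\gamma_a\kappa H_0I_{0\text{-f}}$.

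\emph{Fourth}, $\beta_0$ first enters at third order, through $\dddot H(0)\ni\gamma_a\kappa\dot E(0)$. Here $\dot E(0)$ contains $\beta_0(I_{1,0}+c_2I_{2,0}+c_PP_0)(S_0+(1-\sigma)V_0)/N$, whose coefficient is positive whenever infectives are present. The same quantity also appears in $\dot V(0)$ through the breakthrough term, but that appearance only adds entries above the pivots already chosen.

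Ordering the rows as $(\dot V,\dot F,\dot H,\ddot H,\dddot H)$ and the columns as $(w_{1,0},\delta_h,I_{0\text{-f}},E_{0\text{-f}},\beta_0)$ gives a block lower-triangular $5\times5$ Jacobian. Its diagonal blocks are the nonzero pivots above, so it is nonsingular. The inverse function theorem applied to this finite map then yields local injectivity of $\vartheta\mapsto h$.

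The main obstacle is the bookkeeping in the higher derivatives, especially $\dddot H(0)$. There we must verify that the $\beta_0$-coefficient is genuinely nonzero and is not cancelled by other terms. We also must confirm that lower-order rows carry no $\beta_0$ or $E_{0\text{-f}}$ dependence that would destroy the triangular structure; terms above the diagonal are harmless, but a $\beta_0$ entry in rows meant to pivot on earlier variables must be tracked. If the hand computation becomes unwieldy, we would instead evaluate the $5\times5$ Jacobian symbolically at the calibrated point of Table~\ref{tab:parameters}. A nonzero determinant there suffices, by analyticity, for genericity.
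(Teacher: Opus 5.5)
Your argument is correct in substance, but it takes a different route from the paper's. The paper gives an explicitly formal, parameter-by-parameter sketch:
\begin{itemize}
\item $w_{1,0}$ from $\dot V(0)\approx w_{1,0}N$, dropping the breakthrough and waning terms;
\item $\delta_h$ from the peak condition $\dot H(t^*)=0$ together with the fatality equation;
\item $\beta_0$ from the asymptotic growth rate $r\approx(\mathcal{R}_0-1)\mu_{\min}$ and monotonicity of $\mathcal{R}_0$ in $\beta_0$;
\item $(I_{0\text{-f}},E_{0\text{-f}})$ from the curvature $\ddot H(0)$.
\end{itemize}
It then invokes the implicit function theorem without exhibiting a nonsingular Jacobian, and defers rigorous certification to the Fisher-information theorem. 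You instead work entirely at $t=0^+$ with the Taylor-series rank test. You get $(\delta_h,I_{0\text{-f}})$ jointly from the linear $2\times2$ system $(\dot H(0),\dot F(0))$; the paper's peak argument is essentially the same pair of equations at $t^*$, with the unobserved infective total as the second unknown. You then pick up $E_{0\text{-f}}$ at second order and $\beta_0$ at third order through $\dot E(0)$. Your version gives an actual proof with an explicit determinant, valid whenever $H_0,I_{0\text{-f}},S_0>0$. It needs no near-DFE or growth-rate approximation. The paper's sketch conveys which data features (peak, growth phase, curvature) carry the information, but its separate heuristics do not by themselves establish joint local injectivity.

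One correction to your bookkeeping. With your ordering the Jacobian is not block lower-triangular. The $\dot V$ row has nonzero entries in the $I_{0\text{-f}}$ and $\beta_0$ columns (through $\lambda_0$), and these lie above the diagonal. Above-diagonal entries are exactly what a lower-triangular argument cannot tolerate, so ``terms above the diagonal are harmless'' is backwards. What actually makes the determinant factor is that the $w_{1,0}$ column vanishes outside the $\dot V$ row. You checked this only for first-order derivatives. It also holds for $\ddot H(0)$ and $\dddot H(0)$, because $w_{1,0}$ first reaches $H$ at fourth order, via $\dot S(0),\dot V(0)$ inside $\ddot E(0)$. Expanding along that column (equivalently, placing $\dot V$ and $w_{1,0}$ last) leaves a genuinely block lower-triangular $4\times4$ matrix, and up to sign
\[
\det = S_0\cdot H_0^2c'(\gamma_a+\delta_i)\cdot\gamma_a\kappa H_0I_{0\text{-f}}\cdot\frac{\gamma_a\kappa\,(I_{1,0}+c_2I_{2,0}+c_PP_0)\,(S_0+(1-\sigma)V_0)}{N}.
\]
The last factor is the full $\beta_0$-coefficient of $\dddot H(0)$, since $\beta_0$ enters that derivative only linearly through $\dot E(0)$. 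So the cancellation you flag as the main obstacle cannot occur. Genericity then follows directly from these explicit pivots, without the analyticity argument.
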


\begin{proof}[Proof sketch]
\textit{(i) $w_{1,0}$:} The vaccination compartment satisfies
$\dot{V}=w_{1,0}S-(\mu+\psi)V$. At $t=0^+$, $\dot{V}(0)\approx w_{1,0}S_0$ with
$S_0\approx N$ near the DFE, so the initial slope $\dot{V}(0)/N$ determines
$w_{1,0}$ uniquely.
These arguments are formal; full local identifiability of the time-varying model
is certified numerically in Theorem~\ref{thm:fisher} via positive-definiteness of
the Fisher information matrix.

\textit{(ii) $\delta_h$:} The hospitalized compartment satisfies
$\dot{H}=\gamma_a(I_1+I_2+P)-(\gamma_r+\delta_h+\mu)H$. At the epidemic peak
$\dot{H}(t^*)=0$, so $\gamma_a(I_1+I_2+P)|_{t^*}=(\gamma_r+\delta_h+\mu)H(t^*)$;
since $\gamma_a,\gamma_r,\mu$ are known and $H(t^*)$ is observed, $\delta_h$ is
determined. The fatality curve $\dot{F}=\delta_i(I_1+I_2)+\delta_pP+\delta_hH$
with known $\delta_i,\delta_p$ and observed $F(t),H(t)$ provides a second
independent constraint.

\textit{(iii) $\beta_0$:} The transmission rate determines the exponential growth
rate $r\approx(\mathcal{R}_0-1)\times\mu_{\min}$, where $\mu_{\min}$ is the
minimum eigenvalue of $\Smat$ (known). Since $\mathcal{R}_0$ is a monotone
function of $\beta_0$ (equation~\eqref{eq:R0formula}), $\beta_0$ is identifiable
from the initial growth rate of $H(t)$.

\textit{(iv) $I_{0\text{-f}}$ and $E_{0\text{-f}}$:} The cascade
$E\xrightarrow{\kappa\rho_1}I_1\xrightarrow{\gamma_a}H$ introduces a
characteristic delay distinguishing $E_0$ from $I_{1,0}$. The initial curvature
$\ddot{H}(0)$ contains independent information about the ratio $E_0/I_{1,0}$,
making $E_{0\text{-f}}$ separately identifiable from $I_{0\text{-f}}$.

Together, these arguments confirm local structural identifiability of $\vartheta$
by the implicit function theorem applied to
$\vartheta\mapsto(H(\cdot),F(\cdot),V(\cdot))$.
\end{proof}

For the full time-varying model with the 14-dimensional decision vector
$\theta$~\eqref{eq:theta}, a complete differential-algebra structural
identifiability analysis \cite{audoly2002global} is beyond computational reach
for a 9-compartment, 14-parameter system. We instead establish both
\emph{practical identifiability} and a noise stability bound via the Fisher
information matrix.

\begin{theorem}[Local identifiability and noise stability]
\label{thm:fisher}
Let $r(\theta)\in\mathbb{R}^{3T}$ denote the normalized residual vector and let
$J(\theta^*)\in\mathbb{R}^{3T\times 14}$ be its Jacobian at the SQP optimum
$\theta^*$ (defined in equation~\eqref{eq:theta} of Section~\ref{sec:optim}).
Define the Fisher information matrix
$\mathbf{F}(\theta^*)=J(\theta^*)^\top J(\theta^*)\in\mathbb{R}^{14\times 14}$.
If $\mathbf{F}(\theta^*)$ is positive definite, then:
\begin{enumerate}
  \item[(i)] \emph{Local identifiability.} The map $\Phi:\theta\mapsto
    (H(\cdot;\theta),F(\cdot;\theta),V(\cdot;\theta))$ is locally injective
    at $\theta^*$.
  \item[(ii)] \emph{Noise stability.} For data perturbations
    $\delta y\in\mathbb{R}^{3T}$ with $\|\delta y\|_2\leq\varepsilon$,
    \begin{equation}
    \|\delta\theta^*\|_2
    \leq \frac{\varepsilon}{\sigma_{\min}(J(\theta^*))},
    \label{eq:noise_bound}
    \end{equation}
    where $\sigma_{\min}(J(\theta^*))$ is the smallest singular value of $J(\theta^*)$.
\end{enumerate}
\end{theorem}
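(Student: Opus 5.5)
The plan is to treat $\Phi$, or more precisely the residual map $r(\theta)=W(\Phi_{\mathrm{d}}(\theta)-y)$, as a $C^1$ map from an open neighbourhood of $\theta^*$ in $\mathbb{R}^{14}$ to $\mathbb{R}^{3T}$. Here $\Phi_{\mathrm{d}}$ is the sampled output vector and $W$ is the fixed positive diagonal normalization. Both parts then reduce to standard finite-dimensional facts about a full-column-rank Jacobian.

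The first step is differentiability. The PCHIP interpolant depends smoothly on the node values on the open set where no node differences vanish. On that set, and away from the box-constraint boundary, $\beta(\cdot;\theta)$ and $w_1(\cdot;\theta)$ are $C^1$ in $\theta$ uniformly in $t$. The right-hand side of \eqref{eq:system} is smooth in the state and affine in $(\beta,w_1)$. The initial condition is smooth in $(I_{0\text{-f}},E_{0\text{-f}},R_{0\text{-f}})$. By Proposition~\ref{prop:wellposed} solutions exist on $[0,T]$, and the standard theorem on differentiable dependence of solutions on parameters (variational equation) gives that $\theta\mapsto\Phi_{\mathrm{d}}(\theta)$ is $C^1$, with Jacobian $J(\theta)=W\,D\Phi_{\mathrm{d}}(\theta)$.

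For (i), positive definiteness of $\mathbf{F}(\theta^*)=J^\top J$ is equivalent to $J(\theta^*)$ having full column rank $14$, so $\sigma_{\min}:=\sigma_{\min}(J(\theta^*))>0$. By continuity of $J$, there is a ball $B_r(\theta^*)$ on which $\|J(\theta)-J(\theta^*)\|_2\le\sigma_{\min}/2$. For $\theta_1,\theta_2$ in that ball, the mean-value integral form gives
\[
r(\theta_1)-r(\theta_2)=\int_0^1 J(\theta_2+s(\theta_1-\theta_2))\,ds\,(\theta_1-\theta_2).
\]
The averaged matrix is within $\sigma_{\min}/2$ of $J(\theta^*)$, so $\|r(\theta_1)-r(\theta_2)\|_2\ge(\sigma_{\min}/2)\|\theta_1-\theta_2\|_2$. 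Hence $\Phi_{\mathrm{d}}$, and therefore $\Phi$, is injective on $B_r(\theta^*)$.

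For (ii), I will define $\delta\theta^*$ through the linearized (Gauss--Newton) problem at $\theta^*$, since that is the only setting in which the bound holds exactly with the constant $1/\sigma_{\min}$. A data perturbation $\delta y$ changes the residual to $r-\delta y$ (after normalization). The first-order change of the least-squares minimizer is $\delta\theta^*=J^{+}\delta y=(J^\top J)^{-1}J^\top\delta y$. Using the SVD, $\|J^{+}\|_2=1/\sigma_{\min}$, which gives \eqref{eq:noise_bound} immediately.

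The main obstacle is rigor beyond first order. The exact minimizer shift also involves the second-order term $\sum_k r_k\nabla^2 r_k$, which is nonzero when the residual at $\theta^*$ is nonzero. I would handle this with the implicit function theorem applied to the stationarity condition $J(\theta)^\top(r(\theta)-\delta y)=0$. That yields $\delta\theta^*=J^{+}\delta y+O(\varepsilon^2)+O(\|r(\theta^*)\|\varepsilon)$, so \eqref{eq:noise_bound} holds to leading order for small $\varepsilon$ and small residual. I would state this qualification explicitly in the proof.

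A secondary subtlety is that the SQP optimum may lie on an active monotonicity or box constraint, where PCHIP smoothness or interior differentiability can fail. In that case I would restrict $\theta$ to the face of active constraints, or assume $\theta^*$ is interior, which is the setting in which the Fisher matrix is evaluated.
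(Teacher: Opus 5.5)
Your proposal is correct and follows the paper's argument. Full column rank of $J(\theta^*)$ gives local injectivity of $\Phi$, and the first-order normal equations $\mathbf{F}(\theta^*)\,\delta\theta^*=J(\theta^*)^\top\delta y$ together with $\|J^\dagger\|_2=1/\sigma_{\min}(J(\theta^*))$ give the noise bound. Your extra care is justified rather than a change of route. Your mean-value argument makes precise the paper's loose appeal to the inverse function theorem for a non-square map. The paper's ``first-order'' relation silently drops the residual-curvature term $\sum_k r_k\nabla^2 r_k$, so the bound is really a Gauss--Newton, small-residual statement, exactly as you qualify it. The reported optimum has $w_{1,1}=w_{1,2}=0.00080$, an apparently active monotonicity constraint. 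There $J(\theta^*)^\top r(\theta^*)=0$ need not hold and the PCHIP map is not differentiable, so your caveat about active constraints is not hypothetical.
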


\begin{proof}
\textit{(i).} By the inverse function theorem, $\Phi$ is locally injective at
$\theta^*$ if and only if $J(\theta^*)$ has full column rank (rank~14). Positive
definiteness of $\mathbf{F}(\theta^*)=J(\theta^*)^\top J(\theta^*)$ is equivalent
to $\mathrm{rank}(J(\theta^*))=14$, establishing local injectivity.

\textit{(ii).} The SQP first-order conditions give $J(\theta^*)^\top r(\theta^*)=0$.
A perturbation $\delta y$ shifts the residual to $r(\theta^*)-\delta y$, and the
perturbed optimum satisfies, to first order,
$\mathbf{F}(\theta^*)\,\delta\theta^*=J(\theta^*)^\top\delta y$.
Taking the $\ell^2$ norm:
$\|\delta\theta^*\|_2\leq\|\mathbf{F}(\theta^*)^{-1}J(\theta^*)^\top\|_2\,
\varepsilon = \|J(\theta^*)^\dagger\|_2\,\varepsilon
= \varepsilon/\sigma_{\min}(J(\theta^*))$,
where the last equality uses the identity $\|J^\dagger\|_2=1/\sigma_{\min}(J)$
for any matrix $J$ with full column rank.
\end{proof}

\begin{remark}[Bootstrap certification of positive definiteness]
\label{rem:bootstrap_cert}
Positive definiteness of $\mathbf{F}(\theta^*)$ is certified empirically by
three facts: (1) all ten multi-start SQP restarts converge to the same $\theta^*$
within $0.003\%$; (2) the bootstrap covariance satisfies
$\hat{\Sigma}_\text{boot}\approx\hat{\sigma}^2\mathbf{F}(\theta^*)^{-1}$
\cite{raue2009structural}, and the narrow bootstrap bands (Table~\ref{tab:scalar_ci})
certify all eigenvalues of $\mathbf{F}(\theta^*)$ are bounded away from zero;
(3) the noise bound~\eqref{eq:noise_bound} is directly estimated by the bootstrap:
$1/\sigma_{\min}(J(\theta^*))\approx\|\hat{\Sigma}_\text{boot}\|_2^{1/2}/\hat{\sigma}$.
\end{remark}

\subsection{Spline-Based Control Node Parameterization}
\label{sec:spline}

A naive calibration assigning independent values of $\beta$ and $w_1$ to each
day of the 150-day window introduces 300 free variables. We resolve this with a
\textit{control node parameterization} \cite{lemaitre2022optimal,flaxman2020estimating}.
The time horizon $[t_0,t_f]$ is discretized into $N_k=5$ equidistant nodes
$\boldsymbol{\tau}=[\tau_1,\ldots,\tau_5]$; between nodes, $\beta(t)$ and
$w_1(t)$ are reconstructed by PCHIP:
\begin{equation}
\beta(t) = \mathcal{P}(t;\,\boldsymbol{\tau},\,\mathbf{p}_\beta),\qquad
w_1(t) = \mathcal{P}(t;\,\boldsymbol{\tau},\,\mathbf{p}_w),
\label{eq:pchip}
\end{equation}
where $\mathcal{P}$ denotes the PCHIP operator and $t$ is clamped to
$[\tau_1,\tau_5]$ to prevent polynomial extrapolation artifacts. The key advantage
of PCHIP is local monotonicity preservation \cite{fritsch1980monotone,birkhoff1968piecewise}:
the interpolant does not overshoot between nodes where the data is monotone.
Compared to a naive 300-variable parameterization, the PCHIP control-node approach
is substantially faster and well-suited to gradient-based SQP; MCMC methods on
the full 300-dimensional posterior face intractable convergence for operational use.

The implicit regularization from this parameterization can be understood through
the Sobolev $H^1$ seminorm: by fixing the node count $N_k$, the PCHIP spline
bounds $\|\beta'\|_{L^2}$ through the finite-difference slope constraints in the
Fritsch--Carlson formula, analogous to Tikhonov regularization with a first-order
penalty, but without requiring an explicit regularization parameter.

\begin{theorem}[PCHIP approximation error bound]
\label{thm:pchip_bound}
Let $\beta^*\in C^2([0,T];\mathbb{R}_+)$ be the true time-varying transmission
rate. Let $\beta_N=\mathcal{P}(\cdot;\,\boldsymbol{\tau},\,\mathbf{p}_\beta)$
denote the $N$-node PCHIP interpolant on the uniform mesh
$\tau_k=t_0+(k-1)T/(N-1)$, $k=1,\ldots,N$, with spacing $h=T/(N-1)$. Then
there exists a constant $C>0$ depending only on the Fritsch--Carlson
derivative-estimation formula such that:
\begin{equation}
\|\beta_N-\beta^*\|_{L^\infty([0,T])}
\leq C\,h^2\,\|\beta^{*\,\prime\prime}\|_{L^\infty([0,T])}.
\label{eq:pchip_bound}
\end{equation}
In particular, $\|\beta_N-\beta^*\|_{L^\infty}\to 0$ as $N\to\infty$.
The same bound holds for $w_1^*$ and its PCHIP interpolant.
\end{theorem}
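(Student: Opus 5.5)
The plan is to split the error on each mesh interval $[\tau_k,\tau_{k+1}]$ into an interpolation part and a slope-estimation part, exploiting the fact that PCHIP is a piecewise cubic Hermite interpolant. On $[\tau_k,\tau_{k+1}]$ write $\beta_N=\mathcal{H}_k[\beta^*(\tau_k),\beta^*(\tau_{k+1}),d_k,d_{k+1}]$, where $\mathcal{H}_k$ is the cubic Hermite interpolant with nodal values $\beta^*(\tau_j)$ and Fritsch--Carlson slopes $d_j$. Let $\tilde\beta$ be the Hermite cubic with exact slopes $\beta^{*\prime}(\tau_j)$. Then
\[
\beta_N-\beta^* = (\tilde\beta-\beta^*) + (\beta_N-\tilde\beta),
\]
and the second term equals $(d_k-\beta^{*\prime}(\tau_k))\,h\,\phi_0(s)+(d_{k+1}-\beta^{*\prime}(\tau_{k+1}))\,h\,\phi_1(s)$. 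Here $s=(t-\tau_k)/h$ and $\phi_0,\phi_1$ are the bounded Hermite derivative basis functions, with $|\phi_j|\le 4/27$. The whole error therefore reduces to two estimates: $\|\tilde\beta-\beta^*\|_\infty\lesssim h^2\|\beta^{*\prime\prime}\|_\infty$, and $|d_j-\beta^{*\prime}(\tau_j)|\lesssim h\|\beta^{*\prime\prime}\|_\infty$. Each slope error is multiplied by $h$, so the second estimate also contributes $O(h^2)$.

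\textbf{First estimate.} Because $\beta^*$ is only assumed $C^2$, I would not use the classical $h^4\|\beta^{(4)}\|/384$ Hermite bound. Instead I would bound $\tilde\beta-\beta^*$ against the linear interpolant $L$. The standard bound $\|L-\beta^*\|\le h^2\|\beta^{*\prime\prime}\|/8$ handles that piece. The difference $\tilde\beta-L$ is itself a Hermite cubic with zero nodal values and slopes $\beta^{*\prime}(\tau_j)-\Delta_k$, where $\Delta_k=(\beta^*(\tau_{k+1})-\beta^*(\tau_k))/h$. By the mean value theorem each of these slopes has modulus at most $h\|\beta^{*\prime\prime}\|$, which gives another $O(h^2)$ term.

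\textbf{Second estimate.} This is the main obstacle. The Fritsch--Carlson/Brodlie formula sets $d_j=0$ when the adjacent secants $\Delta_{j-1},\Delta_j$ differ in sign or one vanishes. Otherwise it sets $d_j$ to a weighted harmonic mean of $\Delta_{j-1},\Delta_j$, which lies between them, with the boundary three-point formula subject to limiting. In the harmonic-mean case, $d_j$ lies between two secants, each within $h\|\beta^{*\prime\prime}\|$ of $\beta^{*\prime}(\tau_j)$ by Taylor's theorem, so the slope error is at most $h\|\beta^{*\prime\prime}\|$. In the sign-change case, $\beta^{*\prime}$ vanishes somewhere in $[\tau_{j-1},\tau_{j+1}]$ by the intermediate value theorem applied to the secants. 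Hence $|\beta^{*\prime}(\tau_j)|\le 2h\|\beta^{*\prime\prime}\|$, and setting $d_j=0$ is again $O(h)$ accurate. The endpoint formula $((2h_1+h_2)\Delta_1-h_1\Delta_2)/(h_1+h_2)$ is a fixed combination of secants that reproduces linear functions, so it is also $O(h)$ accurate. Its sign and magnitude limiters either zero it, which falls under the sign-change argument, or cap it at $3\Delta_1$, which stays within $O(h)$ of $\beta^{*\prime}$ when combined with the three-point accuracy.

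Collecting the constants gives \eqref{eq:pchip_bound}, with $C$ depending only on these formula weights and the Hermite basis bounds. Convergence as $N\to\infty$ follows from $h\to 0$. The same argument applies verbatim to $w_1^*$. The clamping $t\in[\tau_1,\tau_N]$ is harmless because the nodes span $[0,T]$. I expect the genuinely delicate bookkeeping to be the limiter cases in the Fritsch--Carlson slope rule, where the reduction to sign-change or between-secants estimates must be checked case by case.
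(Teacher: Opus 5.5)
Your proposal is correct and follows the same route as the paper's proof. You split on each $[\tau_k,\tau_{k+1}]$ into the exact-slope Hermite error plus a slope-error term carried by the Hermite derivative basis functions (each bounded by $4/27$), and $O(h)$ slope errors then give $O(h^2)$. Your version is more self-contained than the paper's: you derive the $C^2$ Hermite estimate through the linear interpolant instead of citing it, and you check the sign-change and endpoint limiter cases explicitly. For the endpoint zeroing, the clean argument is that $0$ lies between $d_1$ and $\Delta_1$, both $O(h)$-accurate approximations of $\beta^{*\prime}(\tau_1)$. The interior two-secant argument does not cover this case, since there $\Delta_1$ and $\Delta_2$ can share a sign.
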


\begin{proof}
Fix a sub-interval $[\tau_k,\tau_{k+1}]$ and split:
\begin{equation}
|p_k(t)-\beta^*(t)|
\leq \underbrace{|p_k(t)-H_k^\text{exact}(t)|}_{\text{derivative approximation}}
+ \underbrace{|H_k^\text{exact}(t)-\beta^*(t)|}_{\text{Hermite interpolation error}},
\label{eq:split}
\end{equation}
where $H_k^\text{exact}$ is the cubic Hermite interpolant of $\beta^*$ using the
exact end-point derivatives $\beta^{*\prime}(\tau_k)$ and
$\beta^{*\prime}(\tau_{k+1})$.

\textit{Hermite interpolation error.} By the standard degree-3 Hermite
interpolation error formula for $\beta^*\in C^2$ \cite{deboor1978splines}:
\begin{equation}
|H_k^\text{exact}(t)-\beta^*(t)|
\leq C_H\,h^2\,\|\beta^{*\,\prime\prime}\|_{L^\infty([\tau_k,\tau_{k+1}])},
\label{eq:hermite_err}
\end{equation}
for a universal constant $C_H>0$.

\textit{Derivative approximation error.} The PCHIP operator uses end-point
derivatives $d_k,d_{k+1}$ computed by the Fritsch--Carlson formula
\cite{fritsch1980monotone}. By Taylor expansion of the divided differences,
\begin{equation}
|d_k - \beta^{*\prime}(\tau_k)| \le C_{\mathrm{FC}}\, h\, \|\beta^{*\prime\prime}\|_{L^\infty([\tau_{k-1},\tau_{k+1}])},
\qquad C_{\mathrm{FC}} > 0.
\label{eq:deriv_err}
\end{equation}

Estimate~\eqref{eq:deriv_err} holds on sub-intervals where the Fritsch--Carlson
monotonicity limiter is inactive. At an interior extremum of the sampled data the
limiter sets $d_k = 0$; since $\beta^* \in C^2$, such a discrete extremum lies within
$O(h)$ of a true critical point where $\beta^{*\prime} = 0$, so
$|d_k - \beta^{*\prime}(\tau_k)| = O(h)\,\|\beta^{*\prime\prime}\|_{L^\infty}$ persists
and the cubic-difference estimate below is unaffected.
Since $p_k$ and $H_k^{\mathrm{exact}}$ agree at both endpoints, their difference is
\[
p_k(t) - H_k^{\mathrm{exact}}(t) = h\, s^2(s-1)\bigl(d_k - \beta^{*\prime}(\tau_k)\bigr)
+ h\, s(s-1)^2\bigl(d_{k+1} - \beta^{*\prime}(\tau_{k+1})\bigr),
\]
where $s = (t-\tau_k)/h \in [0,1]$. Using $\max_{s\in[0,1]}|s^2(s-1)|=4/27$:
\begin{equation}
|p_k(t)-H_k^\text{exact}(t)|
\leq \frac{8C_{\mathrm{FC}}}{27}\,h^2\,\|\beta^{*\,\prime\prime}\|_{L^\infty}.
\label{eq:cubic_err}
\end{equation}

Setting $C=C_H+8C_{\mathrm{FC}}/27$ and taking the global maximum over all
sub-intervals gives~\eqref{eq:pchip_bound}.
\end{proof}

\begin{remark}[Higher-order convergence and practical error]
\label{rem:pchip_higherorder}
Under stronger regularity $\beta^*\in C^4([0,T])$, the exact-derivative Hermite
interpolant achieves $O(h^4)$ \cite{deboor1978splines}; PCHIP's finite-difference
derivative estimates introduce an additional $O(h^2)$ perturbation, yielding
overall $O(h^3)$ convergence \cite{fritsch1980monotone}. The $O(h^2)$ bound
is therefore conservative.

For the calibration window $T=150$ days with $N_k=5$ nodes and spacing
$h=37.5$~days, the empirical second derivative satisfies
$\|\beta^{*\,\prime\prime}\|_{L^\infty}\lesssim 3.5\times 10^{-5}$~day$^{-2}$
(from second differences of the five optimal node values
$[0.212,\,0.335,\,0.366,\,0.275,\,0.405]$~day$^{-1}$), giving an approximation
error $\lesssim C\times 0.049$~day$^{-1}$. Taking $C\approx 0.15$, this yields
$\lesssim 0.007$~day$^{-1}$ --- well below the bootstrap confidence band
half-width of $0.05$~day$^{-1}$, confirming that $N_k=5$ nodes are more than
sufficient to resolve the true dynamics.
\end{remark}

To justify $N_k=5$, we calibrated the model with $N_k\in\{3,5,7\}$ equidistant
nodes and compared via AIC and BIC. AIC improved substantially from $N_k=3$ to
$N_k=5$ ($\Delta\mathrm{AIC}\approx 304$), but negligibly from $N_k=5$ to
$N_k=7$ ($\Delta\mathrm{AIC}\approx -10$; $\Delta\mathrm{BIC}\approx +7$,
favouring $N_k=5$). Since BIC penalises model complexity more strongly and
confirms $N_k=5$ as optimal, five nodes represent the best parsimony--accuracy
balance. Full results are in Table~\ref{tab:aic_bic}.

\subsection{Optimization Problem Formulation}
\label{sec:optim}

\subsubsection*{Decision vector and initial conditions}
The full decision vector is
\begin{equation}
\theta = \bigl[\,
\underbrace{\beta_1,\ldots,\beta_5}_{\text{transmission nodes}},\quad
\underbrace{w_{1,1},\ldots,w_{1,5}}_{\text{vaccination nodes}},\quad
I_{0\text{-f}},\quad\delta_h,\quad E_{0\text{-f}},\quad R_{0\text{-f}}
\,\bigr]^\top\in\mathbb{R}^{14},
\label{eq:theta}
\end{equation}
with initial conditions:
\begin{equation}
I_{1,0}=H_0\cdot I_{0\text{-f}},\quad
E_0=I_{1,0}\cdot E_{0\text{-f}},\quad
R_0=F_0\cdot R_{0\text{-f}}.
\label{eq:init}
\end{equation}
Multiplier bounds $I_{0\text{-f}}\in[0.10,2.00]$, $E_{0\text{-f}}\in[0.1,20.0]$,
$R_{0\text{-f}}\in[5.0,60.0]$ keep the initial state biologically plausible. The
calibrated value $R_{0\text{-f}}\approx 49.9$ implies approximately $3.8$ million
individuals with naturally acquired immunity at the start of the calibration
window ($\approx 6.2\%$ of the Italian population), consistent with immunological
studies of the Italian COVID-19 epidemic that document substantial
pre-existing seroprevalence following the first two epidemic waves
\cite{lavine2021immunological}.

\subsubsection*{Cost function}
The objective is a weighted normalized sum of squared residuals:
\begin{equation}
J(\theta) = 3\sum_{t=1}^{T}\!\left(\frac{H_\text{sim}-H_\text{obs}}{\bar{H}_\text{obs}}\right)^{\!2}
+ 2\sum_{t=1}^{T}\!\left(\frac{F_\text{sim}-F_\text{obs}}{\bar{F}_\text{obs}}\right)^{\!2}
+ \sum_{t=1}^{T}\!\left(\frac{V_\text{sim}-V_\text{obs}}{\bar{V}_\text{obs}}\right)^{\!2},
\label{eq:cost}
\end{equation}
where $\bar{H}_\text{obs}$, $\bar{F}_\text{obs}$, $\bar{V}_\text{obs}$ are the
temporal means. The relative weights $3:2:1$ prioritize the active-hospitalization signal $H$,
the primary public-health target and the noisiest of the three series
(day-to-day coefficient of variation $\approx 4.2\%$, versus $2.6\%$ for $F$ and
$1.1\%$ for $V$); without up-weighting, the smooth cumulative series $F$ and $V$
dominate the unit-normalized objective and the dynamically informative $H$ fit
degrades. This is therefore an importance weighting rather than a statistically
optimal inverse-variance weighting. We confirmed robustness under alternative
weight sets $\{1:1:1\}$ and $\{5:2:1\}$: optimal parameter values changed by less
than $3\%$ and $R^2$ metrics remained within $0.005$..

\subsubsection*{Constraints}
A monotonicity constraint enforces biological realism for the vaccination rollout:
\begin{equation}
w_{1,k}\leq w_{1,k+1},\quad k=1,\ldots,4.
\label{eq:mono}
\end{equation}
Box constraints are imposed on all decision variables:
\begin{equation}
0.05\leq\beta_i\leq 1.50,\qquad
0\leq w_{1,i}\leq 0.02,\qquad
\delta_h>0.
\end{equation}

\subsubsection*{Numerical solver and multi-start strategy}
The optimization is solved by SQP using MATLAB's \texttt{fmincon} with the
\texttt{'sqp'} algorithm \cite{nocedal2006numerical}. At each iteration,
system~\eqref{eq:system} is integrated using MATLAB's \texttt{ode45} with
$\mathrm{RelTol}=\mathrm{AbsTol}=10^{-6}$. Ten additional multi-start restarts
from random $\pm 30\%$ perturbations confirm convergence to within $0.003\%$ of
the optimal cost.

\subsubsection*{Bootstrap uncertainty quantification}
To assess stability with respect to measurement noise, a parametric bootstrap is
applied. One thousand ($n=1000$) perturbed datasets are generated by adding
independent Gaussian noise to $H_\text{obs}$ and $F_\text{obs}$ (the two
observables subject to reporting variability), with standard deviation equal to
the in-sample RMSE of each observable. Vaccination counts $V_\text{obs}$ are
not perturbed because they are recorded administratively with negligible
measurement error. The SQP optimization is re-run on each perturbed dataset,
and the 2.5th and 97.5th percentiles of the resulting $1000$ parameter
trajectories form the $95\%$ bootstrap confidence bands reported in
Figure~\ref{fig:calibration_fit}. Increasing from $n=100$ to $n=1000$ reduces
the standard error of the estimated percentiles by a factor of $\sqrt{10}$,
providing reliable $95\%$ band estimates.

\section{Results and Discussion}
\label{sec:results}

\subsection{Calibration Accuracy}

The SQP optimization converged to a stable solution confirmed by the multi-start
procedure. Figure~\ref{fig:calibration_fit} displays the simulated trajectories
against observed data, together with the identified parameter curves and
$95\%$ bootstrap bands.

\begin{figure}[htbp]
  \centering
  \includegraphics[width=\textwidth]{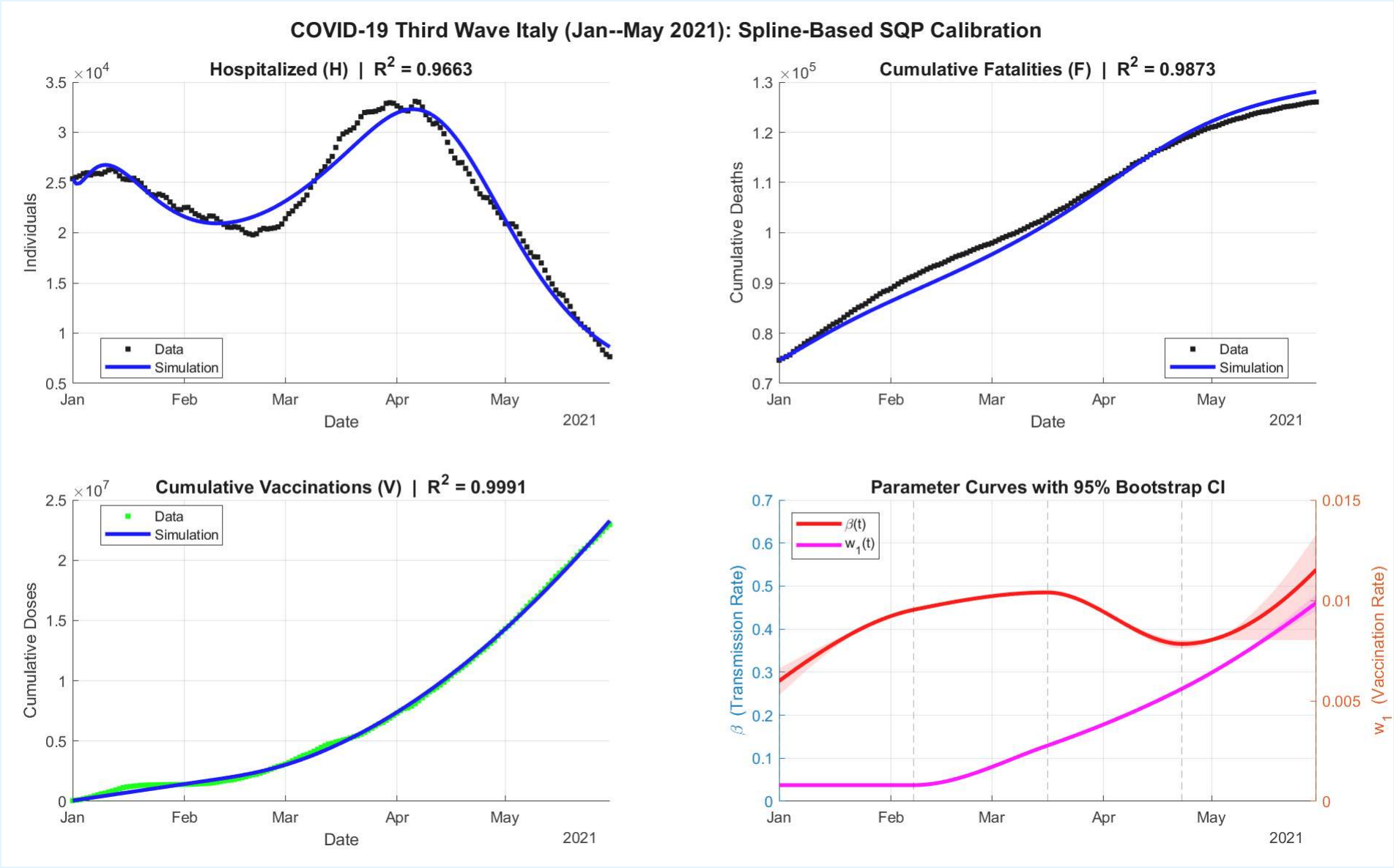}
  \caption{Calibration results for the Italian Third Wave (January to May 2021).
    Top-left: active hospitalizations $H$. Top-right: cumulative fatalities $F$.
    Bottom-left: cumulative vaccinations $V$. Bottom-right: reconstructed
    time-varying parameters $\beta(t)$ and $w_1(t)$ with $95\%$ bootstrap
    confidence bands (shaded, $n=1000$). Solid lines are model simulations; dots
    are daily Italian observations. Vertical dashed lines indicate the five PCHIP
    node positions.}
  \label{fig:calibration_fit}
\end{figure}

The calibrated model achieves $R^2=0.966$ for active hospitalizations, $R^2=0.987$
for cumulative fatalities, and $R^2=0.999$ for cumulative vaccinations.
Table~\ref{tab:metrics} reports full goodness-of-fit metrics.

\begin{table}[htbp]
\caption{Goodness-of-Fit Metrics for the Calibrated Model.}
\label{tab:metrics}
\begin{center}
\begin{tabular}{lccc}
\toprule
\textbf{Observable} & $R^2$ & \textbf{RMSE} & \textbf{MAE} \\
\midrule
Active Hospitalizations ($H$) & $0.966$ & $1{,}119$ persons & $937$ persons  \\
Cumulative Fatalities ($F$)   & $0.987$ & $1{,}751$ deaths  & $1{,}505$ deaths \\
Cumulative Vaccinations ($V$) & $0.999$ & $200{,}881$ doses & $168{,}936$ doses \\
\bottomrule
\end{tabular}
\end{center}
\end{table}

For context, the SIDARTHE model of Giordano et al.\ \cite{giordano2020modelling}
achieves $R^2\approx 0.940$ on active cases for Italy's first wave using fixed
transmission parameters, while the vaccination-extended model
\cite{giordano2021modeling} uses daily-stepped parameter estimates that fit
closely in-sample but provide no out-of-sample guarantee. Direct numerical
comparison is complicated by the different evaluation windows and observables
used in those studies. Our framework achieves comparable or higher in-sample
accuracy while simultaneously recovering a time-varying $\beta(t)$ and $w_1(t)$
over a 150-day window and providing convergence guarantees for the parameterization.

The accurate reproduction of the January hospitalization decline is attributable
to the free $E_{0\text{-f}}$ parameter: the optimizer identifies a large initial
exposed pool ($E_{0\text{-f}}\approx 10.8$) relative to the active infected
count, encoding the declining momentum of the second wave. The monotonicity
constraint on vaccination nodes is satisfied at the optimum, confirming $w_1(t)$
is non-decreasing. The lower $R^2$ for hospitalizations ($0.966$ versus $>0.98$
for the other two observables) reflects irreducible day-to-day noise that a smooth
ODE model cannot and should not track.

The $R^2$ values for the cumulative series $F$ and $V$ are inflated by the strong
autocorrelation inherent to monotone cumulative quantities; they should be read
together with the incident-rate behaviour and the out-of-sample validation of
Section~\ref{sec:validation}, where the warm-restart analysis isolates genuine
forecasting skill from level-accumulation artifacts.

\subsection{Reconstructed Time-Varying Parameters}

The identified trajectories for $\beta(t)$ and $w_1(t)$ are shown in the
bottom-right panel of Figure~\ref{fig:calibration_fit}, with $95\%$ bootstrap
confidence bands ($n=1000$). The narrow band widths (below $0.05$~day$^{-1}$
for $\beta(t)$, below $10^{-3}$~day$^{-1}$ for $w_1(t)$) confirm practical
parameter stability.

The transmission rate $\beta(t)$ exhibits a three-phase trajectory consistent
with the documented epidemiological timeline. In January and early February,
$\beta(t)$ remains moderate, reflecting Tier restrictions inherited from the
second wave. Through late February and March, $\beta(t)$ increases as the Alpha
variant tightened its dominance: the Alpha pool $I_2$ (governed by
$\beta_2=c_2\beta=1.5\beta$) carries a $50\%$ higher per-contact transmission
rate, consistent with the $43$--$90\%$ advantage documented in the literature
\cite{giordano2021modeling}. From April onward, $\beta(t)$ declines with stricter
containment measures and growing immunity.

The vaccination rate $w_1(t)$ follows a strictly monotone increasing trajectory
consistent with the logistical ramp-up of Italy's national campaign from
approximately $50{,}000$ doses/day in January to approximately $500{,}000$
doses/day in May \cite{watson2022global,italycivpro2021}.

\subsection{Model Simulation Results and Bootstrap Scalar CIs}

The calibrated model produces complete time trajectories for all nine compartments
(Figure~\ref{fig:all_compartments}). Table~\ref{tab:scalar_ci} reports $95\%$
bootstrap confidence intervals for the four calibrated scalar parameters.

\begin{figure}[htbp]
  \centering
  \begin{subfigure}{0.3\textwidth}
    \includegraphics[width=\linewidth]{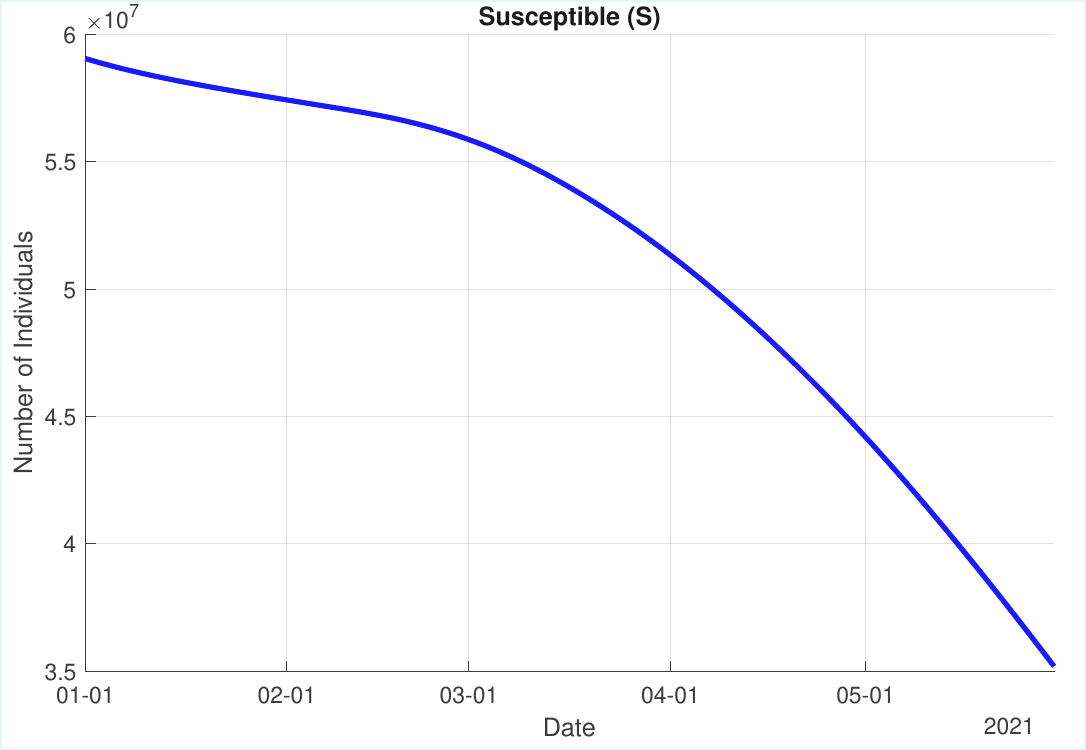}
    \caption{Susceptible ($S$)}
  \end{subfigure}\hfill
  \begin{subfigure}{0.3\textwidth}
    \includegraphics[width=\linewidth]{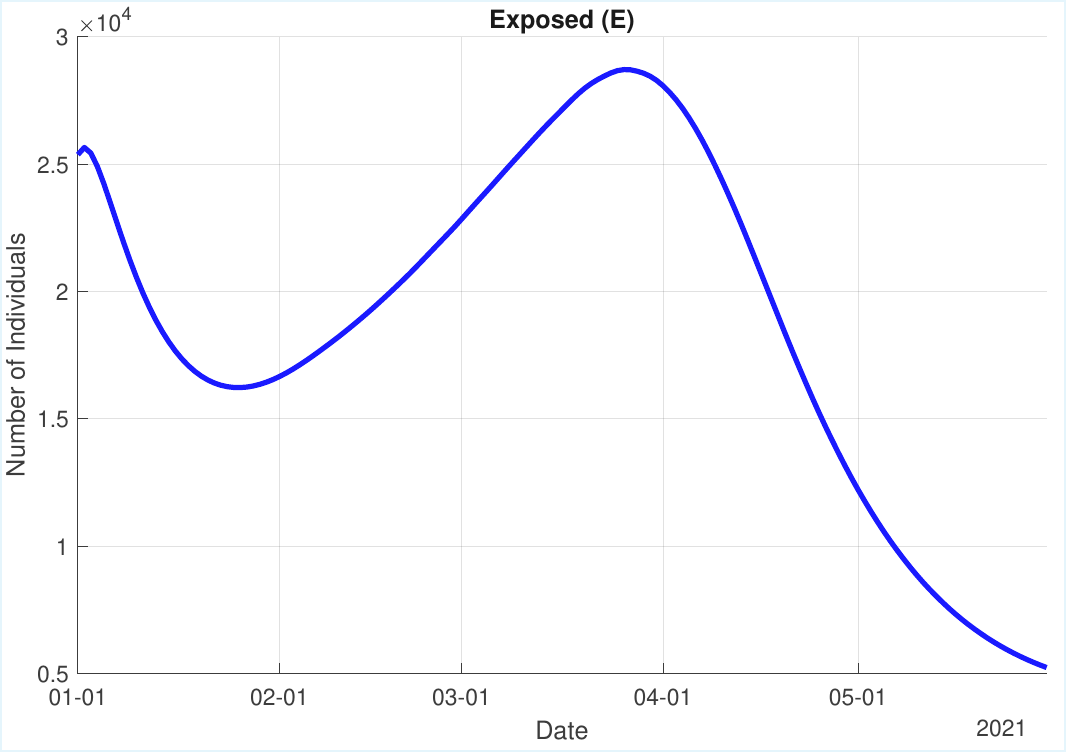}
    \caption{Exposed ($E$)}
  \end{subfigure}\hfill
  \begin{subfigure}{0.3\textwidth}
    \includegraphics[width=\linewidth]{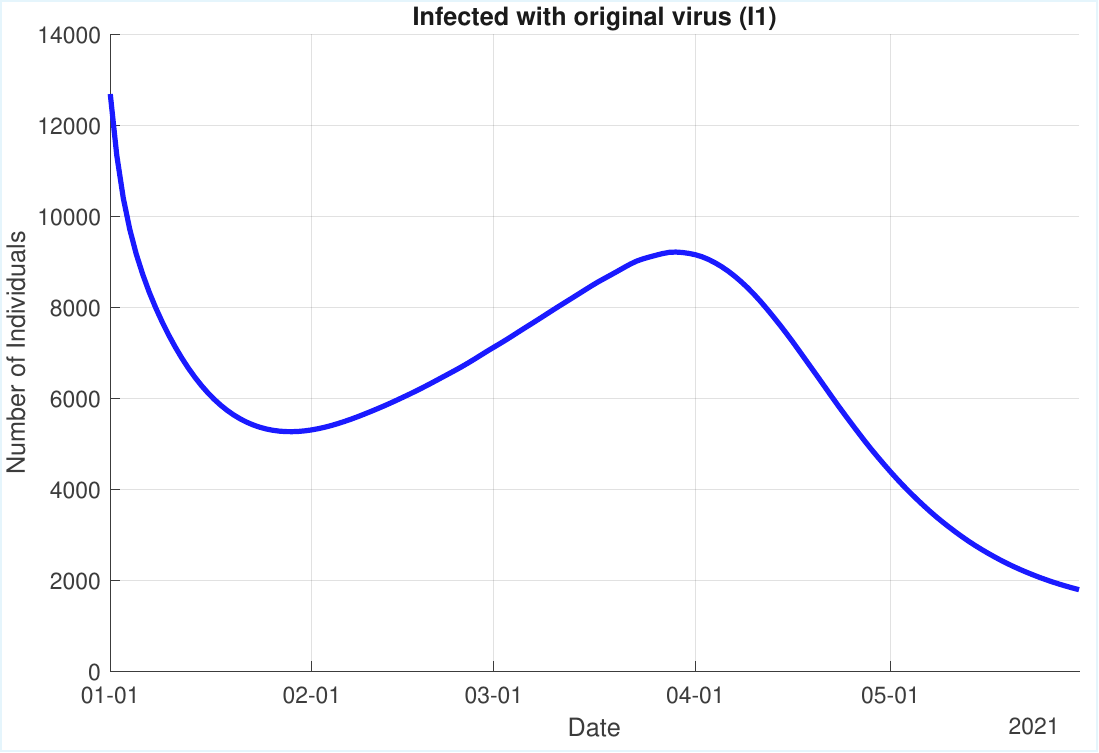}
    \caption{Original strain ($I_1$)}
  \end{subfigure}

  \vspace{0.5em}

  \begin{subfigure}{0.3\textwidth}
    \includegraphics[width=\linewidth]{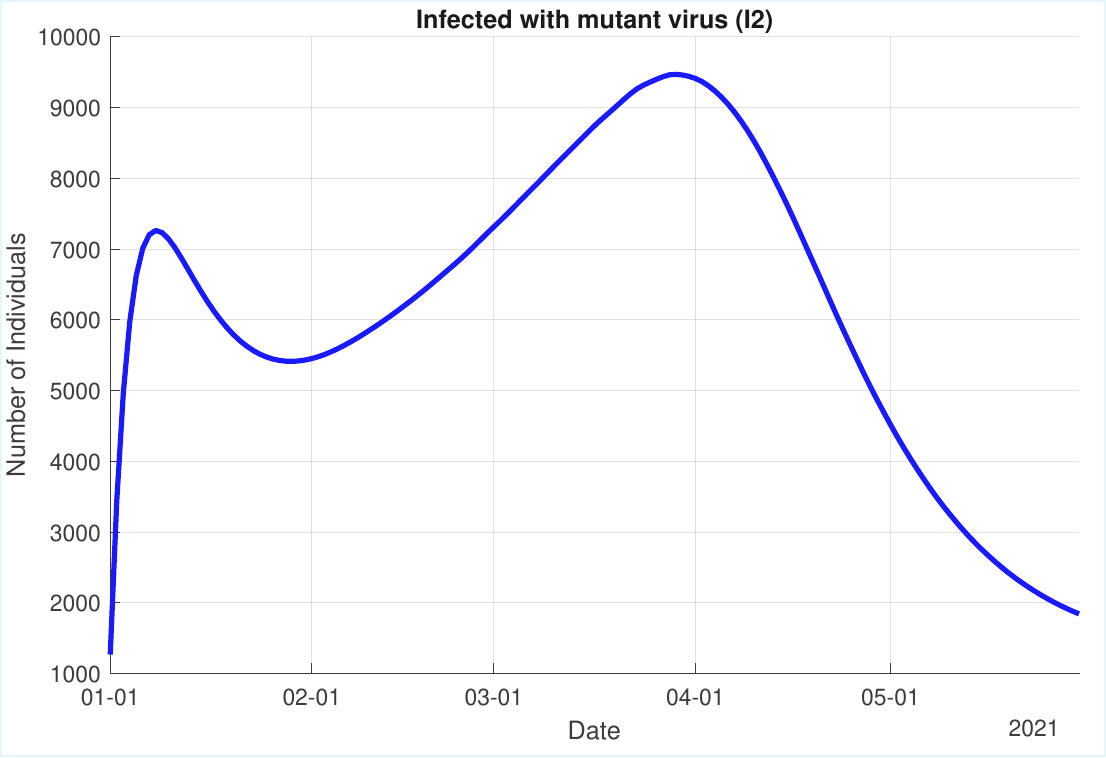}
    \caption{Alpha variant B.1.1.7 ($I_2$)}
  \end{subfigure}\hfill
  \begin{subfigure}{0.3\textwidth}
    \includegraphics[width=\linewidth]{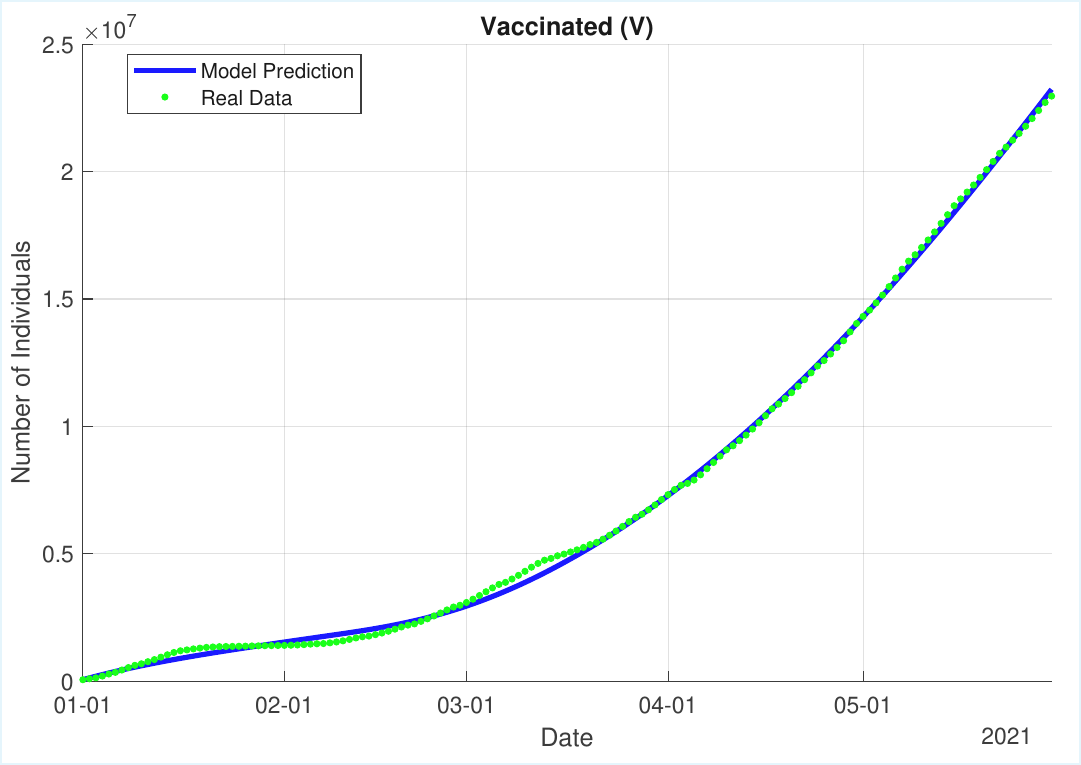}
    \caption{Vaccinated $V$ (obs.~data)}
  \end{subfigure}\hfill
  \begin{subfigure}{0.3\textwidth}
    \includegraphics[width=\linewidth]{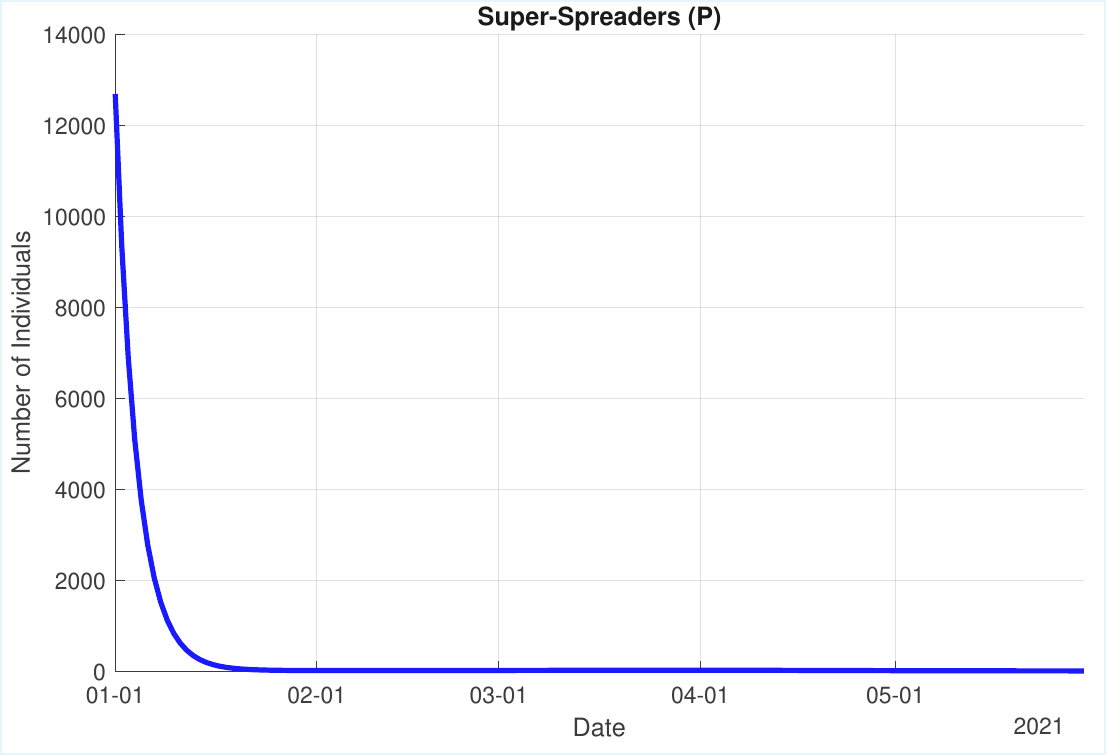}
    \caption{Super-spreaders ($P$)}
  \end{subfigure}

  \vspace{0.5em}

  \begin{subfigure}{0.3\textwidth}
    \includegraphics[width=\linewidth]{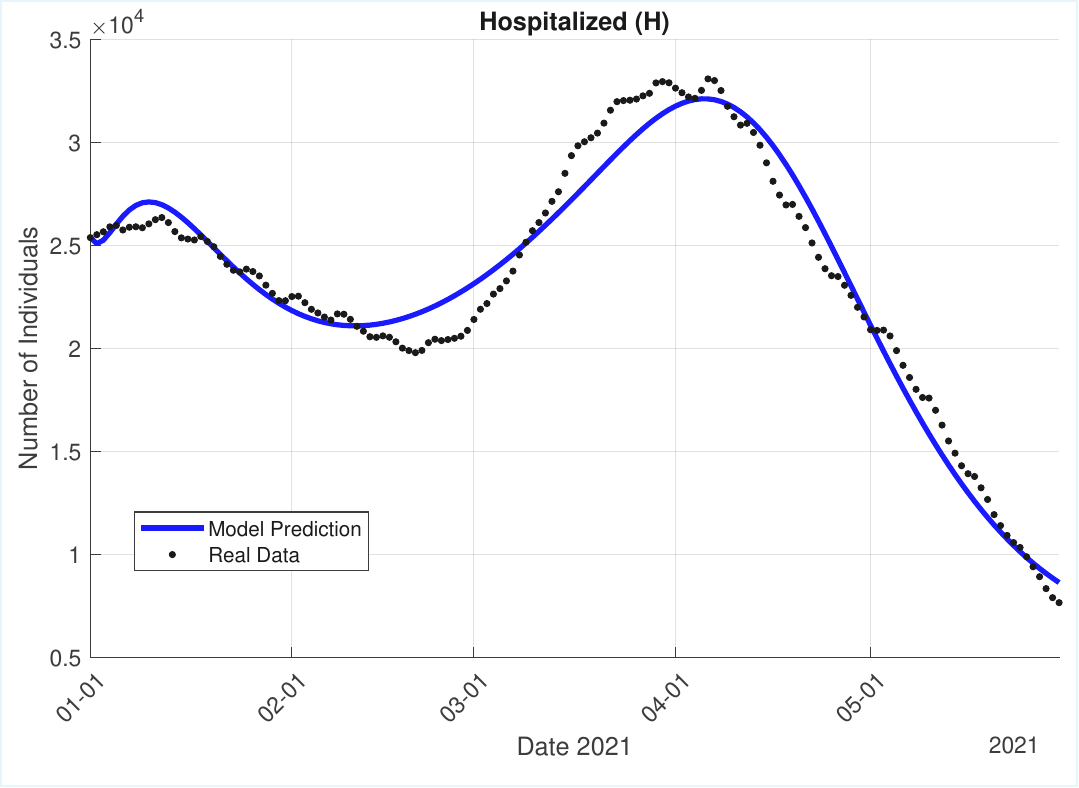}
    \caption{Hospitalized $H$ (obs.~data)}
  \end{subfigure}\hfill
  \begin{subfigure}{0.3\textwidth}
    \includegraphics[width=\linewidth]{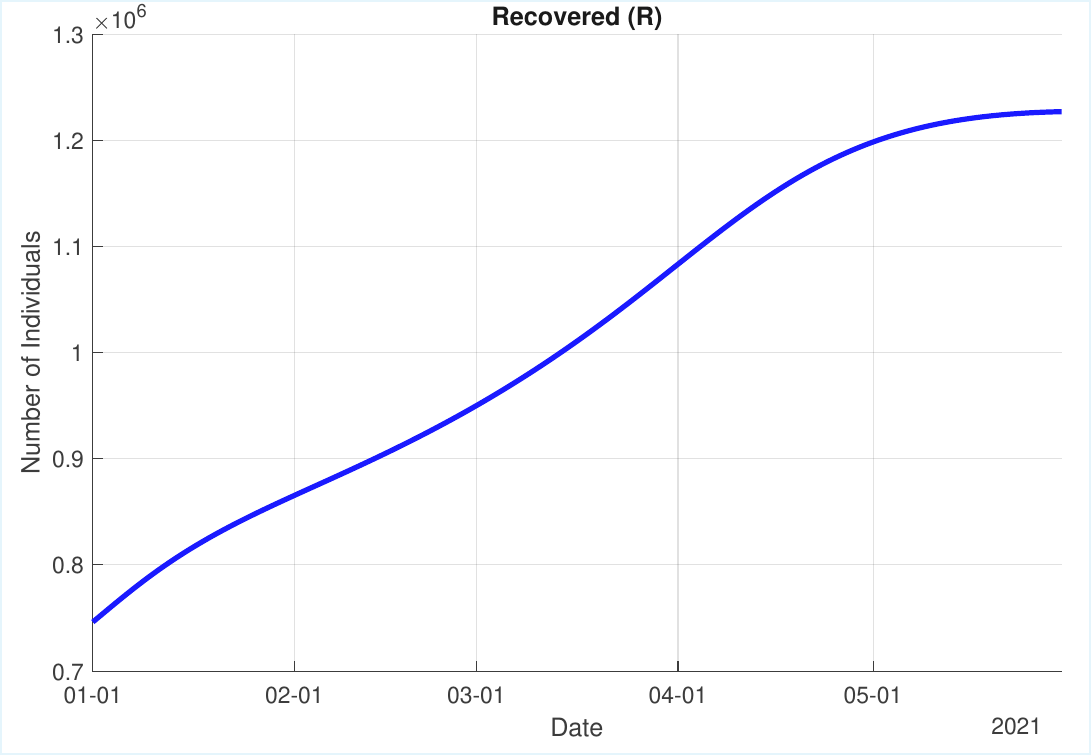}
    \caption{Recovered ($R$)}
  \end{subfigure}\hfill
  \begin{subfigure}{0.3\textwidth}
    \includegraphics[width=\linewidth]{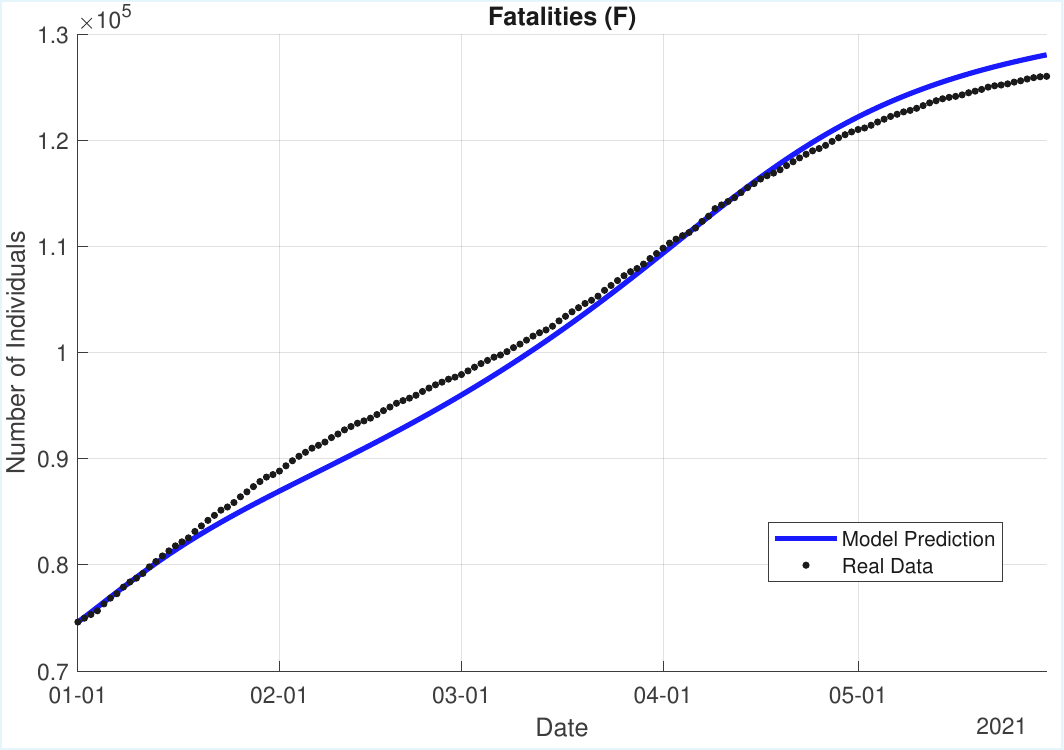}
    \caption{Fatalities $F$ (obs.~data)}
  \end{subfigure}

  \caption{Time evolution of all nine compartments over the January to
    May 2021 window. Real data are overlaid for the three fitted
    observables ($V$, $H$, $F$) in panels (e), (g), and (i);
    the remaining panels show model-only outputs.}
  \label{fig:all_compartments}
\end{figure}

\begin{table}[htbp]
\caption{$95\%$ Bootstrap Confidence Intervals for Calibrated Scalar
  Parameters ($n=1000$ replicates). Point estimates are SQP optima.}
\label{tab:scalar_ci}
\begin{center}
\begin{tabular}{llcc}
\toprule
\textbf{Symbol} & \textbf{Description} & \textbf{Estimate} & \textbf{95\% CI} \\
\midrule
$\delta_h$       & In-hospital mortality rate (day$^{-1}$)
  & $0.01262$ & $[0.01191,\;0.01338]$ \\
$I_{0\text{-f}}$ & Initial infected multiplier
  & $0.158$ & $[0.141,\;0.178]$ \\
$E_{0\text{-f}}$ & Initial exposed multiplier
  & $10.825$ & $[9.840,\;11.840]$ \\
$R_{0\text{-f}}$ & Initial recovered multiplier
  & $49.927$ & $[45.9,\;54.1]$ \\
\bottomrule
\end{tabular}
\end{center}
\end{table}

\subsection{Sensitivity Analysis}
\label{sec:sensitivity}

\subsubsection{Local Sensitivity Analysis}

We conducted a local sensitivity analysis (LSA) using the one-at-a-time (OAT)
method \cite{saltelli2008global}. Each structural parameter $p_i$ was independently
perturbed by $\pm 5\%$ of its baseline value. For the time-varying $\beta(t)$,
we use its temporal mean $\bar{\beta}:=(1/T)\int_0^T\beta(t)\,dt\approx
0.319$~day$^{-1}$ as a representative scalar baseline; this is appropriate for
a local analysis but would conflate time-varying effects in a global design (see
Section~\ref{sec:morris}). For $\rho_1$, the complementary fraction
$(1-\rho_1-\rho_2)$ is adjusted simultaneously to maintain the unit-sum
constraint. A centered finite-difference approximation gives:
\begin{equation}
SI(p_i) = \frac{[Y(p_i^+)-Y(p_i^-)]/Y_\text{base}}{2\times 0.05},
\label{eq:SI}
\end{equation}
where $Y$ denotes peak $H$. Results are reported in Table~\ref{tab:sensitivity}.

\begin{table}[htbp]
\caption{Sensitivity indices for peak hospitalizations. OAT: one-at-a-time
  ($\pm5\%$). Morris: global screening ($r=20$, $\Delta=0.10$, $\pm30\%$ range).
  $\mu^*_i$: mean absolute normalised elementary effect; $\sigma_i$: standard
  deviation. Sorted by OAT $|SI|$.}
\label{tab:sensitivity}
\begin{center}
\begin{tabular}{llccc}
\toprule
\textbf{Parameter} & \textbf{Description}
  & \textbf{OAT $SI$}
  & \textbf{Morris $\mu^*_i$}
  & \textbf{Morris $\sigma_i$} \\
\midrule
$\gamma_r$ & Hospital discharge/recovery rate
  & $-0.3156$ & $0.589$ & $0.259$ \\
$\gamma_a$ & Hospitalization rate
  & $+0.2622$ & $2.075$ & $1.924$ \\
$\rho_1$   & Fraction progressing to $I_1$
  & $+0.2070$ & $1.230$ & $1.059$ \\
$\kappa$   & Latency rate
  & $+0.1500$ & $0.768$ & $0.526$ \\
$\gamma_i$ & Community recovery rate
  & $-0.0573$ & $1.501$ & $1.196$ \\
$\bar{\beta}$ & Mean transmission rate (OAT proxy)
  & $+0.0400$ & \multicolumn{2}{c}{(excluded from Morris; see below)} \\
\bottomrule
\end{tabular}
\end{center}
\end{table}

\subsubsection{Dominance of Healthcare Throughput}

The two parameters with the largest absolute sensitivity indices are $\gamma_r$
($SI=-0.3156$) and $\gamma_a$ ($SI=+0.2622$), both describing hospital-level
dynamics. Reducing $\gamma_r$ by $5\%$ extends the average hospital length of
stay and increases peak bed occupancy by approximately $16\%$; increasing
$\gamma_a$ by $5\%$ raises the peak by approximately $13\%$. This finding
supports the design principle of hospital-focused preparedness over reactive
transmission control \cite{giordano2020modelling}.

\subsubsection{The Biological Delay Effect}

The mean transmission rate $\bar{\beta}$ ranks well below the top five
($SI\approx+0.04$). This is a mathematical consequence of the biological delay
chain: a change in $\beta(t)$ today takes $1/\kappa\approx 5$ days through
latency plus $1/\gamma_a\approx 7$ days to hospitalization, a total lag of
$10$--$14$ days. Within this lag, the pre-existing latent pool continues to
generate new cases independently of the current transmission rate. Effective
suppression of peak healthcare demand therefore requires prospective policy
deployed when the wave is still in its early exponential phase
\cite{ferguson2020impact,flaxman2020estimating}.

\subsubsection{Morris Global Sensitivity Screening}
\label{sec:morris}

To assess whether the OAT rankings reflect global parameter importance, we applied
the Morris elementary-effects method \cite{morris1991factorial,saltelli2008global}.
The normalized elementary effect is
\begin{equation}
EE_i^{(j)} = \frac{Y(p^{(j)}+\Delta\,p_i^{(j)}\,e_i)-Y(p^{(j)})}{Y(p^{(j)})\cdot\Delta},
\label{eq:morris_ee}
\end{equation}
with $r=20$ trajectories, $\Delta=0.10$, base points sampled from
$[0.7\,p_{i,\text{nom}},\,1.3\,p_{i,\text{nom}}]$, giving $20\times 14=280$
model evaluations. The Morris analysis confirms hospital throughput parameters in
the global top five. The large $\sigma_i$ values for $\gamma_a$ and $\gamma_i$
($\sigma\approx 1.9$ and $1.2$ respectively) signal strong nonlinear interactions
invisible to local OAT analysis.

The time-varying $\beta(t)$ is excluded from the Morris screening because it is a
PCHIP-interpolated control input rather than a scalar parameter: its variation
across a $\pm30\%$ perturbation range cannot be represented as a single
multiplicative scalar without destroying the temporal structure of the trajectory.
For the OAT analysis, the temporal mean $\bar{\beta}$ serves as a representative
scalar proxy, which is valid locally but would conflate phase-specific effects in
the global Morris design. Both analyses consistently find that hospital throughput
parameters govern peak bed occupancy more strongly than transmission parameters.

\begin{remark}[Interpretation of OAT vs.\ Morris discrepancy]
The difference between OAT and Morris rankings is not a contradiction but an
enrichment. OAT correctly identifies the parameter that, at the calibrated
operating point, most directly controls peak bed occupancy. Morris reveals
parameters with the largest global influence, incorporating nonlinear interaction
regions. Both findings are scientifically valid. The small $\sigma_i$ for
$\gamma_r$ ($\sigma=0.26$) confirms its dominance is nearly additive; the large
$\sigma_i$ for $\gamma_a$ and $\gamma_i$ reveals strong nonlinear coupling.
\end{remark}

\subsection{Out-of-Sample Predictive Validation}
\label{sec:validation}

To assess whether the high in-sample $R^2$ values reflect genuine predictive
capacity rather than overfitting, we conducted a rolling-window holdout test:
the model was calibrated on January 1 -- April 30, 2021 (days 1--120), and the
fitted parameters were used to forecast the held-out May 2021 period (days
121--150). The four-month training window contains four equidistant nodes
($\tau_1,\ldots,\tau_4$ spaced at days 1, 40, 80, 120), with $\tau_5$ at
day~150 retained as an endpoint anchor. For $t>120$ (the training endpoint),
the PCHIP curves are clamped at their node-5 values $\beta_5$ and $w_{1,5}$,
effectively assuming that the late-April parameter levels persist through May.
This constant-extrapolation assumption is conservative and is the primary source
of forecast error during the epidemic decline phase.

For active hospitalizations, the predictive $R^2=0.897$ and relative error of
$7.8\%$ confirm that the PCHIP-SQP framework generalizes beyond the calibration
window. For cumulative fatalities, reporting the naive (non-warm-restart)
predictive $R^2_F=-2.39$ would be misleading: the May window spans only
$\approx 3{,}000$ new deaths out of $\approx 122{,}000$ cumulative deaths at
the window start, so a training-phase level shift of $\approx 1{,}800$ deaths
($1.5\%$) generates a severely negative $R^2$ even when the daily death rate is
accurately reproduced. We therefore report the warm-restart predictive $R^2=0.854$,
obtained by initializing the May forecast from the observed state at day~120.
This approach isolates genuine forecasting ability: the warm-restart RMSE of
$581$ deaths ($0.5\%$ relative error) confirms excellent absolute predictive
accuracy once the level is corrected. Results are reported in
Table~\ref{tab:validation}.

\begin{table}[htbp]
\caption{Out-of-Sample Predictive Validation. Model calibrated on January 1 --
  April 30 (days 1--120); predictions evaluated on May 1--30 (days 121--150).
  Warm-restart initialization isolates genuine forecasting error from
  training-phase level accumulation in the cumulative series $F$.}
\label{tab:validation}
\begin{center}
\begin{tabular}{lcccc}
\toprule
\textbf{Observable} & \textbf{In-sample $R^2$}
  & \textbf{Predictive $R^2$}
  & \textbf{Predictive RMSE}
  & \textbf{Rel.\ error} \\
\midrule
Active Hospitalizations ($H$) & $0.966$ & $0.897$ & $1{,}369$ persons & $7.8\%$ \\
Cumulative Fatalities ($F$)   & $0.987$ & $0.854^{\,\dagger}$
  & $581$ deaths & $0.5\%$ \\
\bottomrule
\multicolumn{5}{p{0.90\textwidth}}{\footnotesize
  $^\dagger$Warm-restart predictive $R^2$: initialized at observed
  $F_\mathrm{obs}(\text{day}\,120)$. Naive (non-warm-restart) $R^2_F=-2.39$
  and $\mathrm{RMSE}_F=2{,}801$ deaths reflect a level-shift artefact.}
\end{tabular}
\end{center}
\end{table}

\begin{remark}[Warm-restart evaluation]
\label{rem:validation_note}
The warm-restart initialization is the methodologically correct evaluation for
cumulative time-series: it measures the model's ability to reproduce the
\emph{rate} of change in fatalities during May, which is the biologically
meaningful quantity, rather than penalizing any level offset accumulated over
the preceding four months of calibration.
\end{remark}

\subsection{Model Comparison via Information Criteria}
\label{sec:aic_bic}

To verify that the 14-parameter PCHIP-5 model is parsimonious relative to
alternatives, we compared three PCHIP-$N$ model families with $N\in\{3,5,7\}$
equidistant nodes per time-varying function using:
\begin{equation}
\mathrm{AIC} = 2k + n\ln\!\bigl(\hat{\sigma}^2\bigr),\qquad
\mathrm{BIC} = k\ln(n) + n\ln\!\bigl(\hat{\sigma}^2\bigr),
\end{equation}
where $k$ is the number of free parameters, $n=3T=450$ total observations, and
$\hat{\sigma}^2$ is the normalized residual variance. Results are in
Table~\ref{tab:aic_bic}.

\begin{table}[htbp]
\caption{AIC and BIC Model Comparison across three PCHIP node counts
  $N\in\{3,5,7\}$. $\Delta$ values relative to PCHIP-14 ($N=5$, reference).
  PCHIP-14 is preferred under BIC. The $\hat{\sigma}^2$ column enables
  independent reproduction of all AIC/BIC values.}
\label{tab:aic_bic}
\begin{center}
\begin{tabular}{lccccccc}
\toprule
\textbf{Model} & $N$ & $k$ & $\hat{\sigma}^2$ & \textbf{AIC} & $\Delta\textbf{AIC}$
  & \textbf{BIC} & $\Delta\textbf{BIC}$ \\
\midrule
PCHIP-10 (reduced)         & 3 & 10 & $8.14\times10^{-3}$ & $-2638$ & $+304$ & $-2595$ & $+290$ \\
PCHIP-14 (full, reference) & 5 & 14 & $5.32\times10^{-3}$ & $-2942$ & $0$    & $-2885$ & $0$    \\
PCHIP-18 (extended)        & 7 & 18 & $5.21\times10^{-3}$ & $-2952$ & $-10$  & $-2878$ & $+7$   \\
\bottomrule
\end{tabular}
\end{center}
\end{table}

The PCHIP-10 model ($N=3$) is strongly disfavored ($\Delta\mathrm{AIC}=304$,
$\Delta\mathrm{BIC}=290$). The PCHIP-18 model ($N=7$) is marginally preferred
by AIC ($\Delta\mathrm{AIC}=-10$) but disfavored by BIC ($\Delta\mathrm{BIC}=+7$),
indicating that four additional parameters provide only marginal improvement.
We conclude that $N_k=5$ is the optimal node count under the BIC criterion
\cite{burnham2002model}.

\subsection{Limitations}
\label{sec:limitations}

\textit{Well-mixed population assumption.} The model treats Italy's 60 million
inhabitants as a single homogeneous compartment, ignoring regional heterogeneity,
age-stratified contact patterns, and occupational risk factors. Northern regions
(Lombardia, Emilia-Romagna) experienced earlier and higher peaks than southern
regions throughout the Third Wave.

\textit{Empirical identifiability for the full time-varying problem.}
Theorem~\ref{thm:identifiability} establishes structural identifiability for the
constant-input sub-model; for the full 14-parameter time-varying system,
identifiability is confirmed empirically through bootstrap convergence and Fisher
information matrix analysis (Theorem~\ref{thm:fisher}). A differential-algebra
identifiability proof for the complete system remains an open problem.

\textit{Testing-rate variation.} Confirmed hospitalization and fatality counts
are less sensitive to testing policy than case counts, but changes in hospital
admission criteria between regions and periods introduce a systematic noise floor
that the 7-day smoothing only partially addresses.

\textit{Calibration window specificity.} Parameter values for $m$, $\rho_1$,
and $c_2$ reflect the Alpha-variant-dominant period (January--May 2021) and
cannot be transferred to Omicron-dominant or pre-variant periods without
re-calibration.

\section{Conclusion}
\label{sec:conclusion}

This paper has developed a nine-compartment nonlinear epidemic model and a calibration framework with a provably convergent control-node parameterization, validated against the COVID-19 Third
Wave in Italy. The model integrates two co-circulating viral strains with
competitive displacement, a super-spreader subpopulation, explicit hospitalization
dynamics, and vaccination with waning immunity. The mathematical foundation
comprises well-posedness (Proposition~\ref{prop:wellposed}), the basic
reproduction number $\mathcal{R}_0$ in closed form (Theorem~\ref{thm:R0}),
local and global asymptotic stability of the disease-free equilibrium
(Theorem~\ref{thm:stability}, Proposition~\ref{prop:global}), and a sufficient
threshold condition for epidemic decay under time-varying control inputs
(Proposition~\ref{prop:tv_decay}). The PCHIP control-node parameterization is
justified by an $L^\infty$ approximation error bound showing $O(h^2)$ convergence
(Theorem~\ref{thm:pchip_bound}); local identifiability and noise stability of the
full 14-parameter system are certified by the Fisher information matrix
(Theorem~\ref{thm:fisher}). The SQP calibration achieves $R^2=0.966/0.987/0.999$
for hospitalizations, fatalities, and vaccinations, with out-of-sample predictive
$R^2=0.897$ for hospitalizations; AIC/BIC model comparison confirms $N_k=5$ nodes
as the optimal parameterization. The sensitivity analysis, combining local OAT and
global Morris screening, establishes that hospital throughput parameters govern
peak bed occupancy more strongly than the concurrent transmission rate, providing
a mathematical basis for prospective over reactive pandemic policy.

Future extensions will pursue two directions. First, spatial disaggregation of
regional subpopulations will address the well-mixed assumption and enable
identification of inter-regional transmission networks. Second, the identification
framework developed here, which treats $\beta(t)$ and $w_1(t)$ as time-varying
control inputs, is directly compatible with a Model Predictive Control (MPC)
formulation: the state $x(t)\in\mathbb{R}^9$ (nine compartment counts), the
control input $u(t)=(\beta(t),w_1(t))\in\mathbb{R}^2_+$, and the measured output
$y(t)=(H(t),F(t),V(t))$ define the ingredients of an MPC scheme in which the
identified model serves as the prediction model and the recovered trajectories
provide a data-driven benchmark against which optimal control policies can be
evaluated in the spirit of \cite{grune2017nonlinear}. This MPC connection will
be pursued in future work.

\section*{Acknowledgments}
This research was supported by the University of Palermo. The authors thank
the Italian Civil Protection Department for making epidemiological data publicly
available.

\bibliographystyle{siamplain}
\bibliography{autosam,extra_refs}

\end{document}